\def\G{{\Gamma}}
\def\1{{\bf 1}}
\def\0{{\bf 0}}
\def\L{{\mathcal{L}}}
\def\D{{\mathcal{D}}}
\newtheorem{prethm}{{\bf Theorem}}
\newenvironment{thm}{\begin{prethm}\sl{\hspace{-0.5
               em}{\bf}}}{\end{prethm}}
\newtheorem{preprop}{{\bf Proposition}}
\newenvironment{prop}{\begin{preprop}\sl{\hspace{-0.5
               em}{\bf}}}{\end{preprop}}
\newtheorem{preex}{{\bf Example}}
\newtheorem{precor}{{\bf Corollary}}
\newenvironment{cor}{\begin{precor}\sl{\hspace{-0.5
               em}{\bf}}}{\end{precor}}
\newtheorem{preremark}{{\bf Remark}}
\newtheorem{prelem}{{\bf Lemma}}
\newenvironment{lem}{\begin{prelem}\sl{\hspace{-0.5
               em}{\bf}}}{\end{prelem}}
\newtheorem{preproof}{{\bf Proof}}
\newenvironment{proof}[1]{\begin{preproof}{\rm
               #1}\hfill{$\Box$}}{\end{preproof}}
\title{Graphs whose normalized Laplacian has three eigenvalues\footnote{This version is published in Linear Algebra and its Applications 435 (2011), 2560-2569.}}
\author{E.R. van Dam$^{\textrm{a}}$   \quad  G.R. Omidi$^{\textrm{b},\textrm{c},1}$  \\[2pt]
{\small  $^{\textrm{a}}$Tilburg University, Dept. Econometrics and Operations Research,}\\
{\small  P.O. Box 90153, 5000\,LE, Tilburg, The Netherlands}\\
{\small  $^{\textrm{b}}$Dept. Mathematical Sciences, Isfahan University of Technology},\\
{\small Isfahan, 84156-83111, Iran}\\
{\small $^{\textrm{c}}$School of Mathematics, Institute for Research in Fundamental Sciences (IPM),}\\
{\small P.O. Box 19395-5746, Tehran, Iran }\\[2pt]
{$\mathsf{edwin.vandam@uvt.nl}$ \quad  $\mathsf{romidi@cc.iut.ac.ir}$  }}
\date{}
\begin{document}

\maketitle \footnotetext[1] {This research was in part
supported by a grant from IPM (No. 89050037).}

\begin{abstract}
\noindent We give a combinatorial characterization of graphs
whose normalized Laplacian has three distinct eigenvalues.
Strongly regular graphs and complete bipartite graphs are
examples of such graphs, but we also construct more exotic
families of examples from conference graphs, projective planes,
and certain quasi-symmetric designs.
\end{abstract}
\noindent {\it AMS Classification}: 05C50, 05E30
\\[.1cm]
{\it Keywords:} normalized Laplacian matrix, transition matrix, graph spectra, eigenvalues, strongly
regular graphs, quasi-symmetric designs

\section{Introduction}
In their pioneering monograph on spectra of graphs,
Cvetkovi\'c, Doob, and Sachs \cite[\S 1.2, 1.6]{CDS} mention
the spectrum of the transition matrix as one of the possible
spectra to investigate graphs, and they give some properties of
the coefficients of the corresponding characteristic
polynomial. The spectrum of the transition matrix and the
spectrum of the normalized Laplacian matrix are in (an easy)
one-one correspondence, so that studying the latter is
essentially the same as studying the first. The normalized
Laplacian is mentioned briefly in the recent monograph by
Cvetkovi\'c, Rowlinson, and Simi\'c \cite[\S 7.7]{CRS};
however, the standard reference for it is the monograph by
Chung \cite{Fc}, which deals almost entirely with this matrix.

Graphs with few distinct eigenvalues have been studied for
several matrices, such as the adjacency matrix
\cite{BM,dCvDS,CO,vD-4e,vD-3e,vDS,MK}, the Laplacian matrix
\cite{vDH-mu,WFT}, the signless Laplacian matrix \cite{AOT},
the Seidel matrix \cite{S2}, and the universal adjacency matrix
\cite{HO1}. One of the reasons for studying such graphs is that they
have a lot of structure, and can be thought of as
generalizations of strongly regular graphs (see also the
manuscript by Brouwer and Haemers \cite{BH}).

Typically, graphs with few distinct eigenvalues seem to be the
hardest graphs to distinguish by the spectrum. Put a bit
differently, it seems that most graphs with few eigenvalues are
not determined by the spectrum. Thus, the question of which
graphs are determined by the spectrum (as studied in
\cite{vDH-ds, DS2}) is another motivation for studying graphs
with few distinct eigenvalues. For the normalized Laplacian
matrix, there are some recent constructions of graphs with the
same spectrum by Butler and Grout \cite{B,BG}. Some other
recent work on the normalized Laplacian (energy) is done by
Cavers, Fallat, and Kirkland \cite{CFK}.

In this paper, we investigate graphs whose normalized
Laplacian has three eigenvalues. The only graphs whose
normalized Laplacian has one eigenvalue are empty graphs, and
the (connected) ones with two eigenvalues are complete. We
shall give a characterization of graphs whose normalized
Laplacian has three eigenvalues. Strongly regular graphs and
complete bipartite graphs are examples of such graphs, but we
also construct more exotic families of examples from conference
graphs, projective planes, and certain quasi-symmetric designs.

\section{Basics}

Throughout, $\G$ will denote a simple undirected graph with $n$
vertices. The {\sl adjacency matrix} of $\G$ is the $n\times n$
$01$-matrix $A=[a_{uv}]$ with rows and columns indexed by the
vertices, where $a_{uv}=1$ if $u$ is adjacent to $v$, and $0$
otherwise. Let $D=[d_{uv}]$ be the $n\times n$ diagonal matrix
where $d_{uu}$ equals the valency $d_{u}$ of vertex $u$. The
matrix $L = D-A$ is better known as the {\sl Laplacian matrix}
of $\G$. The {\sl normalized Laplacian} matrix of $\G$ is the
$n\times n$ matrix $\mathcal{L}=[\ell_{uv}]$ with
$$\ell_{uv}=\left \{ \begin{array}{cl} 1 &{\rm if} ~u=v, ~
d_u\neq 0, \\
 -1/\sqrt{d_{u}d_{v}} & {\rm if} ~u {\rm ~is ~adjacent~ to~} v, ~\\
 0 & {\rm otherwise.}  \end{array}\right. $$
If $\G$ has no isolated vertices then
$\mathcal{L}=D^{-\frac{1}{2}}LD^{-\frac{1}{2}}=I-D^{-\frac{1}{2}}AD^{-\frac{1}{2}}$.
Mohar \cite{M} calls this matrix the transition Laplacian, but
others (for example Tan \cite{T}) use this term for the matrix
$D^{-1}L$. Both matrices, and also the transition matrix
$D^{-1}A$, have the same number of distinct eigenvalues, so for
our purpose this makes no difference. Let $\lambda_1\geq
\lambda_2\geq \cdots \geq \lambda_n$ be the eigenvalues of
$\mathcal{L}$ or, as we shall write from now on, the
$\mathcal{L}$-eigenvalues of $\G$. The following basic results
are from \cite[Lemmas 1.7-8]{Fc} (see also \cite{CDHLPS}).

\begin{lem}\label{basic} Let $n \geq 2$. A graph $\G$ on $n$ vertices has the following properties.\\
{\rm (i)} $\lambda_n=0$,\\{\rm (ii)}  $\sum_i
\lambda_i\leq n$ with equality holding if and only if $\G$ has no
isolated vertices,\\{\rm (iii)}  $\lambda_{n-1}\leq
n/(n-1)$ with equality holding if and only if $\G$ is a complete
graph on $n$ vertices,\\{\rm (iv)} $\lambda_{n-1}\leq 1$ if $\G$ is non-complete,\\
{\rm (v)} $\lambda_{1}\geq n/(n-1)$ if $\G$ has no isolated
vertices,\\{\rm (vi)} $\lambda_{n-1}> 0$ if
$\G$ is connected. If $\lambda_{n-i+1}=0$
and $\lambda_{n-i}\neq 0$, then $\G$ has exactly $i$ connected
components,\\{\rm (vii)} The spectrum of $\G$
is the union of the spectra of its connected components,\\{\rm (viii)} $\lambda_i\leq 2$ for all $i$,
with $\lambda_1=2$ if and only if some connected component of $\G$ is a
non-trivial bipartite graph,\\{\rm (ix)}
$\G$ is bipartite if and only if $2-\lambda_i$ is an eigenvalue of $\G$ for each $i$.
\end{lem}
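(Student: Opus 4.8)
The engine behind all nine parts is the edge-form of the quadratic form of $\L$. Putting $\mathcal{A}=D^{-\frac12}AD^{-\frac12}$, so that $\L=I-\mathcal{A}$ away from the isolated vertices, a one-line expansion gives, for every vector $g$,
$$g^{\top}\L g=\sum_{u\sim v}\Big(\frac{g_u}{\sqrt{d_u}}-\frac{g_v}{\sqrt{d_v}}\Big)^2 ,$$
the sum being over the edges of $\G$; substituting $g=D^{\frac12}f$ turns the Rayleigh quotient $g^{\top}\L g/g^{\top}g$ into $\big(\sum_{u\sim v}(f_u-f_v)^2\big)/\big(\sum_u d_uf_u^2\big)$. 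I would read (i) straight off this: the form is nonnegative, so $\L$ is positive semidefinite, and $f=\1$ (that is, $g=D^{\frac12}\1$) kills the numerator, exhibiting $0$ as an eigenvalue. For (ii) I would take the trace: the diagonal of $\L$ is $1$ at each non-isolated vertex and $0$ at each isolated one, so $\sum_i\lambda_i=\mathrm{tr}\,\L$ equals the number of non-isolated vertices, hence $\le n$ with equality exactly when none is isolated. Part (v) is then pure averaging: with $\lambda_n=0$ and $\sum_i\lambda_i=n$, the largest eigenvalue is at least the average $n/(n-1)$ of $\lambda_1,\dots,\lambda_{n-1}$.

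The three parts about the second-smallest eigenvalue I would obtain from the Courant--Fischer formula
$$\lambda_{n-1}=\min\Big\{\frac{\sum_{u\sim v}(f_u-f_v)^2}{\sum_u d_uf_u^2}\ :\ \sum_u d_uf_u=0,\ f\neq\0\Big\},$$
where the constraint is $D$-orthogonality to the kernel vector $D^{\frac12}\1$. To prove (iv) for a non-complete graph I would choose two non-adjacent vertices $x,y$ and test with $f$ supported on $\{x,y\}$ and $d_xf_x+d_yf_y=0$; because $x\not\sim y$, numerator and denominator both equal $d_xf_x^2+d_yf_y^2$, so the quotient is $1$ and $\lambda_{n-1}\le1$. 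Then (iii) follows by comparison with the direct computation $\mathrm{spec}(K_n)=\{0,\,(n/(n-1))^{(n-1)}\}$: the complete graph attains $n/(n-1)$, while every non-complete graph satisfies $\lambda_{n-1}\le1<n/(n-1)$, so equality holds exactly for $K_n$. For (vi) I would analyse the kernel: $g^{\top}\L g=0$ forces $g_u/\sqrt{d_u}=g_v/\sqrt{d_v}$ on every edge, so $D^{-\frac12}g$ is constant on each non-isolated component and arbitrary on each isolated vertex; thus $\dim\ker\L$ equals the number of connected components, which is $1$ precisely for connected $\G$. Finally (vii) is immediate, since ordering the vertices by components makes $\L$ block-diagonal.

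The bipartite statements (viii) and (ix) I would base on the companion ``plus-sign'' identity
$$g^{\top}(2I-\L)\,g=\sum_{u\sim v}\Big(\frac{g_u}{\sqrt{d_u}}+\frac{g_v}{\sqrt{d_v}}\Big)^2\ \ge 0 ,$$
valid on each non-trivial component. This gives $\lambda_i\le2$ at once, and $\lambda_1=2$ exactly when some nonzero $g$ makes the sum vanish, i.e.\ when $D^{-\frac12}g$ changes sign across every edge of some component---which for a connected graph is precisely a proper $2$-colouring; hence (viii). For the forward half of (ix) I would use the sign-flip map: for a bipartite $\G$ with colour classes, negating an $\mathcal{A}$-eigenvector on one class sends eigenvalue $1-\lambda$ to $-(1-\lambda)$, i.e.\ sends an $\L$-eigenvalue $\lambda$ to $2-\lambda$, so the spectrum is symmetric about $1$. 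For the converse I would note that (for a graph without isolated vertices) symmetry about $1$ makes all odd moments $\mathrm{tr}\,\mathcal{A}^{2k+1}=\sum_i(1-\lambda_i)^{2k+1}$ vanish; since this trace counts positively weighted closed walks of odd length, their absence means $\G$ has no odd cycle, i.e.\ $\G$ is bipartite. The one place demanding care---and the main obstacle---is exactly this bipartite equivalence: I must keep track of multiplicities (so that ``$2-\lambda_i$ is an eigenvalue'' is read as symmetry of the whole spectrum, as in Chung), and must quarantine the isolated vertices and the trivial single-vertex components, for which $\L=0$ and the identities above degenerate.
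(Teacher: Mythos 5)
The paper never proves Lemma \ref{basic}: it is quoted from Chung's monograph \cite[Lemmas 1.7--8]{Fc} (with a further pointer to \cite{CDHLPS}), so the only fair comparison is with that cited source, and your reconstruction is essentially its proof. The quadratic form $g^{\top}\L g=\sum_{u\sim v}\bigl(g_u/\sqrt{d_u}-g_v/\sqrt{d_v}\bigr)^2$, the Courant--Fischer description of $\lambda_{n-1}$ with a test vector supported on two non-adjacent vertices, the kernel analysis for (vi), and the companion form for $2I-\L$ are exactly the standard tools, and all nine parts are argued correctly (for (iv) you should note that if one of the chosen non-adjacent vertices is isolated the denominator of your quotient vanishes, but then the graph is disconnected and $\lambda_{n-1}=0$ by (vi), so the claim holds anyway).

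The caveat you flag for (ix) is genuinely necessary, not pedantry, and it is worth recording why. With isolated vertices allowed, the multiset (symmetric-spectrum) reading fails in the forward direction: $K_2$ plus an isolated vertex is bipartite but has $\L$-spectrum $\{0,0,2\}$. Meanwhile the literal set reading ``$2-\lambda_i$ is an eigenvalue for each $i$'' fails in the backward direction for disconnected graphs: the disjoint union of $C_4$ (eigenvalue set $\{0,1,2\}$) and the cone over $2K_2$ (the bowtie, with eigenvalue set $\{0,\frac12,\frac32\}$ by Proposition \ref{cone}) is non-bipartite, yet its eigenvalue set $\{0,\frac12,1,\frac32,2\}$ is closed under $\lambda\mapsto 2-\lambda$. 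So your repair --- excluding isolated vertices and reading the condition as multiset symmetry --- is exactly the right one; alternatively one can restrict to connected graphs, which is all the paper ever uses (e.g.\ in Proposition \ref{bipartite}), and then the backward implication becomes immediate from your (viii): $0$ is an eigenvalue, hence $2=2-0$ must be one, hence the graph is bipartite, with no need for the trace-of-odd-powers argument.
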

Because of (vii), the study of the $\mathcal{L}$-eigenvalues
can be restricted to connected graphs without loss of
generality. So from now on, $\G$ will be a connected graph, and
the trivial $\L$-eigenvalue $0$ occurs with multiplicity one.

\section{Three distinct eigenvalues}

In this section, we give a characterization of graphs whose
normalized Laplacian has three (distinct) eigenvalues. This characterization forms the basis
for the rest of the paper.
Using Lemma \ref{basic}, it follows that the only graphs with one
$\mathcal{L}$-eigenvalue are the empty graphs.
Using (ii) and {\rm (iii)} of Lemma \ref{basic}, we find that a connected graph
has two $\mathcal{L}$-eigenvalues if and only if it is complete.

In order to describe graphs with three normalized Laplacian
eigenvalues, we let $\hat{d}_u=\sum_{v\sim u}\frac1{d_v}$ be the
{\em normalized valency} of $u$, and let $\sum_{w\sim
u,v}\frac1{d_w}$  be the {\em normalized number of common neighbors} of
two distinct vertices $u$ and $v$. We denote this normalized number of common neighbors by
$\hat{\lambda}_{uv}$ if $u$ and $v$ are adjacent, and by $\hat{\mu}_{uv}$ if they are not.

\begin{thm}\label{main} Let $\Gamma$ be a connected graph with $e$ edges.
Then $\Gamma$ has three $\mathcal{L}$-eigenvalues
$0, \theta_1, \theta_2$ if and only if the
following three properties hold.\\
{\rm (i)} $\hat{d}_u=td_u^2-(\theta_1-1)(\theta_2-1)d_u$ for all
vertices $u$,\\
{\rm (ii)} $\hat{\lambda}_{uv}=td_ud_v+2-\theta_1-\theta_2$ for
adjacent vertices $u$ and $v$,\\
{\rm (iii)} $\hat{\mu}_{uv}=td_ud_v$ for non-adjacent vertices $u$ and
$v$,\\
where $t=\frac{\theta_1\theta_2}{2e}$.
\end{thm}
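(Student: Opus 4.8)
The plan is to encode the three-eigenvalue condition as a single rank-one matrix identity and then read it off entry by entry. Since $\G$ is connected it has no isolated vertices, so it is convenient to pass to the normalized adjacency matrix $P:=I-\L=D^{-1/2}AD^{-1/2}$; its eigenvalues are $1-\lambda$ as $\lambda$ ranges over the $\L$-eigenvalues, so $\G$ has $\L$-eigenvalues $0,\theta_1,\theta_2$ precisely when $P$ has eigenvalues $1,\r_1,\r_2$ with $\r_i:=1-\theta_i$. A one-line check gives $P\mathbf{x}=\mathbf{x}$ for the vector $\mathbf{x}=D^{1/2}\1$ (with entries $\sqrt{d_u}$), and because $\G$ is connected the eigenvalue $0$ of $\L$, hence the eigenvalue $1$ of $P$, is simple; the orthogonal projection onto its eigenline is $E=\frac{1}{2e}\mathbf{x}\mathbf{x}^{\top}$, using $\mathbf{x}^{\top}\mathbf{x}=\sum_u d_u=2e$.

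First I would prove the pivotal equivalence between the spectral condition and the quadratic identity
\begin{equation}\label{key}
(P-\r_1 I)(P-\r_2 I)=t\,\mathbf{x}\mathbf{x}^{\top},\qquad t=\tfrac{\theta_1\theta_2}{2e}.
\end{equation}
For the ``only if'' part, the spectral decomposition of the symmetric matrix $P$ shows that the left-hand side annihilates the $\r_1$- and $\r_2$-eigenspaces and acts as $(1-\r_1)(1-\r_2)=\theta_1\theta_2$ on the line spanned by $\mathbf{x}$, so it equals $\theta_1\theta_2\,E=t\,\mathbf{x}\mathbf{x}^{\top}$. For the ``if'' part, applying \eqref{key} to any vector orthogonal to $\mathbf{x}$ shows that $(P-\r_1I)(P-\r_2I)$ kills $\mathbf{x}^{\perp}$, so every eigenvalue of $P$ other than the one on $\mathbf{x}$ lies in $\{\r_1,\r_2\}$; together with $P\mathbf{x}=\mathbf{x}$ this confines the spectrum to $\{1,\r_1,\r_2\}$.

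The second step is the entrywise translation of \eqref{key}. Writing $P_{uv}=a_{uv}/\sqrt{d_ud_v}$ one computes
$$(P^2)_{uv}=\frac{1}{\sqrt{d_ud_v}}\sum_{w}\frac{a_{uw}a_{wv}}{d_w},$$
whose inner sum is exactly $\hat d_u$ when $v=u$, $\hat\lambda_{uv}$ when $u\sim v$, and $\hat\mu_{uv}$ when $u\not\sim v$. Expanding the left side of \eqref{key} as $P^2-(\r_1+\r_2)P+\r_1\r_2 I$ with $\r_1+\r_2=2-\theta_1-\theta_2$ and $\r_1\r_2=(1-\theta_1)(1-\theta_2)$, and comparing with the right side (whose $(u,v)$ entry is $t\sqrt{d_ud_v}$), the diagonal entries give (i) after multiplying through by $d_u$, the adjacent off-diagonal entries give (ii) after multiplying by $\sqrt{d_ud_v}$, and the non-adjacent ones give (iii); here one uses $(1-\theta_1)(1-\theta_2)=(\theta_1-1)(\theta_2-1)$. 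Every manipulation is reversible, so (i)--(iii) hold if and only if \eqref{key} does, which combined with the previous paragraph yields the theorem.

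The routine but error-prone part is this entrywise bookkeeping, in particular keeping the weights $\sqrt{d_ud_v}$ and $d_u$ consistent across the three cases; that is where I expect to spend the most care. The one genuinely delicate point is the ``if'' direction's claim that the spectrum is \emph{exactly} $\{0,\theta_1,\theta_2\}$ and not a proper subset. Here I would invoke that complete graphs were already shown to have only two $\L$-eigenvalues, so it suffices to treat non-complete connected $\G$; such a $\G$ has at least three distinct $\L$-eigenvalues, while \eqref{key} confines its spectrum to the three-element set $\{0,\theta_1,\theta_2\}$, forcing all three values to be attained.
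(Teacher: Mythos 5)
Your proposal is correct and takes essentially the same approach as the paper: the paper's entire proof consists of the rank-one identity $(\L-\theta_1I)(\L-\theta_2I)=\frac{\theta_1\theta_2}{2e}(D^{1/2}{\bf j})(D^{1/2}{\bf j})^{\top}$ followed by ``from this equation, the stated characterization follows,'' and since $\L-\theta_iI=-(P-\rho_iI)$, your key equation is literally the same matrix identity rewritten in terms of $P=I-\L$. You merely supply the details the paper omits --- the spectral-projection argument, the entrywise bookkeeping, and the ``exactly three, not fewer'' point in the converse, which the paper passes over in silence and which, as you correctly sense, genuinely requires setting aside complete graphs (indeed $K_n$ satisfies (i)--(iii) with $\theta_1=\frac{n}{n-1}$ and $\theta_2$ arbitrary, so the theorem is implicitly read for non-complete $\G$).
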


\begin{proof} {Since $\mathcal{L}$ is
symmetric, it follows that $\G$ has eigenvalues $0, \theta_1$,
and $\theta_2$ if and only if
$(\mathcal{L}-\theta_1I)(\mathcal{L}-\theta_2I)$ is a symmetric
rank one matrix. If so, then its non-zero eigenvalue is
$\theta_1\theta_2$ and has eigenvector $D^{\frac12}{\bf j}$ (an
eigenvector of $\mathcal{L}$ corresponding to eigenvalue $0$),
where {\bf j} is the all-ones vector. By working this out, we
get the equation
$$(\mathcal{L}-\theta_1I)(\mathcal{L}-\theta_2I)=\frac{\theta_1\theta_2}{2e}(D^{\frac12}{\rm
{\bf j}})(D^{\frac12}{\rm {\bf j}})^{\top}.$$ From this
equation, the stated characterization follows. }
\end{proof}
From Theorem \ref{main} we immediately find the below two corollaries.
Recall that $\G$ is {\sl strongly regular} with
parameters $(n,k,\lambda,\mu)$, whenever $\G$ is $k$-regular with
$0<k<n-1$, and the number of common neighbors of any two distinct
vertices equals $\lambda$ if the vertices are adjacent and $\mu$
otherwise (see \cite{BH}).

\begin{cor}\label{srg}
A connected regular graph has three
$\mathcal{L}$-eigenvalues if and only if it is strongly regular.
\end{cor}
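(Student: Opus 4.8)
The plan is to apply Theorem~\ref{main} directly, specializing its three conditions to the regular case and reading off the strongly regular parameters. First I would assume $\G$ is connected and $k$-regular with three $\L$-eigenvalues $0,\theta_1,\theta_2$, and verify $0<k<n-1$: connectedness rules out isolated vertices (so $k>0$), and if $k=n-1$ the graph is complete, which has only two $\L$-eigenvalues by the discussion preceding Theorem~\ref{main}, a contradiction; hence $\G$ is non-complete and $k<n-1$. With $d_u=k$ for all $u$, the number of edges is $e=nk/2$, so $t=\frac{\theta_1\theta_2}{2e}=\frac{\theta_1\theta_2}{nk}$.

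Next I would substitute $d_u=d_v=k$ into the three conditions of Theorem~\ref{main}. Since every neighbor has valency $k$, the normalized quantities reduce to ordinary ones divided by $k$: the normalized valency becomes $\hat d_u=k/k=1$, the normalized number of common neighbors of adjacent vertices becomes $\hat\lambda_{uv}=\lambda_{uv}/k$, and similarly $\hat\mu_{uv}=\mu_{uv}/k$, where $\lambda_{uv}$ and $\mu_{uv}$ denote the actual common-neighbor counts. Condition~(i) then reads $1=tk^2-(\theta_1-1)(\theta_2-1)k$, which is a single equation with no free vertex-dependent terms, so it is automatically consistent and imposes no further constraint beyond fixing the relation among $k,\theta_1,\theta_2,n$. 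The crucial point is that conditions~(ii) and~(iii) give
\[
\lambda_{uv}=k\bigl(tk^2+2-\theta_1-\theta_2\bigr),\qquad
\mu_{uv}=k\cdot tk^2,
\]
and the right-hand sides depend only on the global parameters $k,\theta_1,\theta_2$, \emph{not} on the specific vertices $u,v$. Hence $\lambda_{uv}$ takes a constant value $\lambda$ for all adjacent pairs and $\mu_{uv}$ takes a constant value $\mu$ for all non-adjacent pairs. Together with regularity and $0<k<n-1$, this is exactly the definition of a strongly regular graph with parameters $(n,k,\lambda,\mu)$.

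For the converse I would start from a strongly regular graph with parameters $(n,k,\lambda,\mu)$ and show its normalized Laplacian has three eigenvalues. Here the cleanest route is to use $\L=I-k^{-1}A$, so the $\L$-eigenvalues are $1-\theta/k$ as $\theta$ ranges over the adjacency eigenvalues of $A$; since a connected strongly regular graph has exactly three adjacency eigenvalues, $\L$ has exactly three eigenvalues, one of which is $1-k/k=0$. I expect the only mildly delicate point to be bookkeeping: confirming that the two non-trivial $\L$-eigenvalues are distinct (equivalently that the two non-principal adjacency eigenvalues are distinct, which holds for a genuine strongly regular graph) and that $0$ has multiplicity one by connectedness. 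No step presents a real obstacle; the argument is essentially a substitution into Theorem~\ref{main} plus the standard spectral description of strongly regular graphs.
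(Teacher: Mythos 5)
Your proof is correct. The paper gives no written proof at all --- it simply asserts that the corollary follows ``immediately'' from Theorem~\ref{main} --- so the intended argument is the specialization you carry out in your forward direction: with $d_u=k$ for all $u$ the normalized quantities become $\hat{d}_u=1$, $\hat{\lambda}_{uv}=\lambda_{uv}/k$, $\hat{\mu}_{uv}=\mu_{uv}/k$, and conditions (ii) and (iii) force the common-neighbor counts to be the vertex-independent constants $\lambda=k(tk^2+2-\theta_1-\theta_2)$ and $\mu=tk^3$; your check that $0<k<n-1$ (needed to match the paper's definition of strongly regular) is a detail the paper leaves implicit. Where you genuinely deviate is the converse: the symmetric route would be to verify the equations of Theorem~\ref{main} for a strongly regular graph, which requires solving for the putative eigenvalues $\theta_1,\theta_2$ from $\theta_1+\theta_2=2+(\mu-\lambda)/k$ and $\theta_1\theta_2=n\mu/k^2$ and checking consistency of condition (i) via the parameter identity $k(k-\lambda-1)=(n-k-1)\mu$. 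You instead use that $\mathcal{L}=I-k^{-1}A$ for a regular graph, so distinct $\mathcal{L}$-eigenvalues correspond bijectively to distinct adjacency eigenvalues, and then invoke the classical fact that a connected strongly regular graph has exactly three distinct adjacency eigenvalues. This is cleaner and avoids the bookkeeping, at the cost of importing an external standard result rather than staying inside the paper's Theorem~\ref{main} framework; both routes are sound, and your remark that the two non-principal adjacency eigenvalues of a genuine strongly regular graph are distinct (the discriminant $(\lambda-\mu)^2+4(k-\mu)$ cannot vanish when $0<k<n-1$) is the right point to flag.
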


\begin{cor}\label{diameter}
A connected graph with three $\mathcal{L}$-eigenvalues has diameter
two.
\end{cor}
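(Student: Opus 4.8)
The plan is to show that a connected graph $\G$ with exactly three $\mathcal{L}$-eigenvalues must have diameter two. The natural approach is to exploit the characterization from Theorem \ref{main}, specifically property (iii), which says that for any two non-adjacent vertices $u$ and $v$, the normalized number of common neighbors satisfies $\hat{\mu}_{uv}=td_ud_v$, where $t=\theta_1\theta_2/(2e)$.

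First I would recall that the diameter of a connected graph equals one precisely when the graph is complete, and a complete graph has only two $\mathcal{L}$-eigenvalues (as noted in the excerpt using Lemma \ref{basic}(iii)). Hence $\G$ is non-complete, so its diameter is at least two, and it suffices to rule out diameter three or more. I would argue by contradiction: suppose there exist two vertices $u$ and $v$ at distance at least three. Then $u$ and $v$ are non-adjacent and, crucially, share no common neighbor, so $\hat{\mu}_{uv}=\sum_{w\sim u,v}\frac{1}{d_w}=0$.

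The key step is then to observe that $t=\theta_1\theta_2/(2e)$ is strictly positive. Since $\G$ has no isolated vertices (being connected on $n\ge 2$ vertices) it has at least one edge, so $e>0$. The two nonzero eigenvalues $\theta_1$ and $\theta_2$ both lie in the interval $(0,2]$ by Lemma \ref{basic}(vi) and (viii): connectivity forces the smallest nonzero eigenvalue to be positive, and all eigenvalues are at most $2$. Therefore $\theta_1\theta_2>0$ and $t>0$. Combined with $d_u,d_v\ge 1$, property (iii) gives $\hat{\mu}_{uv}=td_ud_v>0$, contradicting $\hat{\mu}_{uv}=0$.

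I expect the main obstacle to be a purely bookkeeping one: making sure that the positivity of $t$ is rigorously justified, i.e.\ that both nonzero eigenvalues are genuinely positive rather than one possibly being negative. This is handled cleanly by Lemma \ref{basic}, since the $\mathcal{L}$-eigenvalues of a connected graph satisfy $\lambda_{n-1}>0$ and $\lambda_1\le 2$, pinning both $\theta_1$ and $\theta_2$ into $(0,2]$. Once positivity of $t$ is in hand, the contradiction is immediate, and I would conclude that no two vertices can be at distance more than two, so $\G$ has diameter exactly two.
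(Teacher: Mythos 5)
Your proof is correct and follows exactly the route the paper intends: the paper states this corollary as an immediate consequence of Theorem \ref{main}, and the implicit argument is precisely yours---property (iii) forces $\hat{\mu}_{uv}=td_ud_v>0$ for non-adjacent vertices, so any two non-adjacent vertices share a common neighbor, while non-completeness (two eigenvalues iff complete) rules out diameter one. Your careful verification that $t>0$ via Lemma \ref{basic}(vi) and (viii) is a sound filling-in of the detail the paper leaves unstated.
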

Both results are not surprising, knowing that the same results
hold for other matrices such as the adjacency matrix,
Laplacian matrix, and signless Laplacian matrix.

\section{Bipartite graphs}

A complete bipartite graph is an example of a graph with three $\mathcal{L}$-eigenvalues;
it was already observed by Chung \cite[Ex. 1.2]{Fc} that it has eigenvalues
$0,1$ (with multiplicity $n-2$), and $2$.
In this section, we give some characterizations
of bipartite graphs with three $\mathcal{L}$-eigenvalues.

\begin{prop} Let $\G$ be a connected triangle-free graph with
three $\mathcal{L}$-eigenvalues. Then $\G$ is a triangle-free
strongly regular graph or a complete bipartite graph. \end{prop}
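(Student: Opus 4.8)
The plan is to feed triangle-freeness into the equations of Theorem~\ref{main} and read off the degree structure. Since $\G$ has three distinct $\mathcal{L}$-eigenvalues $0,\theta_1,\theta_2$, we have $\theta_1\theta_2\neq 0$ and hence $t=\frac{\theta_1\theta_2}{2e}\neq 0$. Because $\G$ is triangle-free, two adjacent vertices have no common neighbor, so $\hat{\lambda}_{uv}=0$ for every edge $uv$. Substituting this into property (ii) gives $t\,d_ud_v=\theta_1+\theta_2-2$ for every edge; that is, the product $d_ud_v$ equals one and the same positive constant $c$ on all edges.

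First I would use this edge identity together with connectivity to show that at most two distinct valencies occur. Fix a vertex $u_0$ and set $a=d_{u_0}$; every neighbor of $u_0$ then has valency $c/a$, every neighbor of those has valency $a$ again, and so on. Propagating along paths and using that $\G$ is connected, every valency lies in $\{a,b\}$, where $b=c/a$. I would then split into two cases according to whether $a=b$. If $a=b$, then $\G$ is regular, and Corollary~\ref{srg} shows that a connected regular graph with three $\mathcal{L}$-eigenvalues is strongly regular; as $\G$ is triangle-free, it is a triangle-free strongly regular graph.

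If $a\neq b$, then the identity $d_ud_v=ab$ forces the two endpoints of each edge to have different valencies (neither $a^2$ nor $b^2$ equals $ab$), so no edge joins two vertices of the same valency; hence $\G$ is bipartite with parts $U$ (the valency-$a$ vertices) and $V$ (the valency-$b$ vertices). It remains to prove that $\G$ is complete bipartite, and here I would invoke property (iii). Suppose $u\in U$ and $v\in V$ were non-adjacent. Any common neighbor of $u$ and $v$ would have to lie in both $V$ and $U$, which is impossible, so $\hat{\mu}_{uv}=0$; but property (iii) gives $\hat{\mu}_{uv}=t\,d_ud_v=tab=tc\neq 0$, a contradiction. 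Therefore every vertex of $U$ is adjacent to every vertex of $V$, and $\G$ is complete bipartite.

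The only genuine work is the bookkeeping in the middle paragraph—verifying that the valency propagation is forced and really produces a bipartition—rather than any single hard step. Once the edge identity $d_ud_v\equiv c$ is in hand, the regular case is immediate from Corollary~\ref{srg}, and the non-regular case collapses quickly via property (iii). The main point to get right is that in the bipartite case a non-edge across the two parts has no common neighbor, so property (iii) can only hold if no such non-edge exists at all, which is exactly completeness.
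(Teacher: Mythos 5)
Your proof is correct and follows essentially the same route as the paper: triangle-freeness fed into Theorem \ref{main}(ii) makes $d_ud_v$ constant over edges, the regular case is dispatched by Corollary \ref{srg}, and in the non-regular case that constancy forces a bipartition into the two valency classes. The only cosmetic difference is the final step, where the paper cites Corollary \ref{diameter} (diameter two) to get completeness while you re-derive that fact in place from Theorem \ref{main}(iii) --- the same argument, inlined.
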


\begin{proof} { If $\G$ is regular, then it is clearly strongly-regular. So
assume that $\G$ is non-regular. Because $\G$ is triangle-free,
and using Theorem \ref{main}, it follows that for every pair of
adjacent vertices $u,v$, it holds that
$0=\hat{\lambda}_{uv}=td_ud_v+2-\theta_1-\theta_2$. Because $G$
is connected and non-regular, there is a pair of adjacent
vertices $u,v$ with distinct valencies $d_u$ and $d_v$. The
above equation now implies that only these two valencies occur,
and that there are no odd cycles in $\G$. Hence $\G$ is
bipartite. By Corollary \ref{diameter}, $\G$ must be complete
bipartite. }\end{proof}
We call a graph $\mathcal{L}$-integral if all its $\mathcal{L}$-eigenvalues are integral, i.e., 0, 1, or 2.
The complete bipartite graphs are such graphs; in fact, no other connected graphs are $\mathcal{L}$-integral.

\begin{prop}\label{bipartite} Let $\G$ be connected. Then the following are equivalent.\\
{\rm (i)} $\G$ is bipartite with three $\mathcal{L}$-eigenvalues,\\
{\rm (ii)} $\G$ is $\mathcal{L}$-integral,\\
{\rm (iii)} $\G$ is complete bipartite.
\end{prop}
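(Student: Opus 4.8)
The plan is to prove Proposition~\ref{bipartite} by establishing a cycle of implications $\mathrm{(iii)}\Rightarrow\mathrm{(ii)}\Rightarrow\mathrm{(i)}\Rightarrow\mathrm{(iii)}$, using the already-computed spectrum of complete bipartite graphs and the structural results obtained earlier. The implication $\mathrm{(iii)}\Rightarrow\mathrm{(ii)}$ is immediate: Chung's computation (quoted at the start of Section~4) shows that a complete bipartite graph has $\mathcal{L}$-eigenvalues $0$, $1$, and $2$, all of which are integers, so such a graph is $\mathcal{L}$-integral by definition.

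For $\mathrm{(ii)}\Rightarrow\mathrm{(i)}$, suppose $\G$ is $\mathcal{L}$-integral, so every $\mathcal{L}$-eigenvalue lies in $\{0,1,2\}$ by Lemma~\ref{basic}(viii) together with (i). Since $\G$ is connected, the eigenvalue $0$ occurs with multiplicity exactly one, so $\G$ has at least two distinct $\mathcal{L}$-eigenvalues, and it is non-complete unless it is a single edge. The presence of the eigenvalue $2$ is exactly what I would use to force bipartiteness: by Lemma~\ref{basic}(viii), $\lambda_1=2$ if and only if some connected component is a non-trivial bipartite graph, and since $\G$ is connected this means $\G$ itself is bipartite. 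I must therefore argue that $2$ actually occurs as an eigenvalue. If it did not, then all eigenvalues would lie in $\{0,1\}$, giving at most two distinct eigenvalues, whence $\G$ would be complete by the two-eigenvalue characterization in Section~3; but a complete graph on $n\ge 3$ vertices has $\lambda_{n-1}=n/(n-1)\notin\{0,1\}$ by Lemma~\ref{basic}(iii), a contradiction unless $n=2$, and $K_2$ is itself complete bipartite. Hence $2$ is an eigenvalue, $\G$ is bipartite, and the three distinct values $0,1,2$ give exactly three $\mathcal{L}$-eigenvalues.

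For $\mathrm{(i)}\Rightarrow\mathrm{(iii)}$, suppose $\G$ is bipartite with three $\mathcal{L}$-eigenvalues. A bipartite graph is triangle-free, so the preceding proposition applies and tells us that $\G$ is either a triangle-free strongly regular graph or a complete bipartite graph. To finish I need to rule out the strongly regular case, and the key observation is Lemma~\ref{basic}(ix): for a bipartite graph, $2-\lambda_i$ is an eigenvalue whenever $\lambda_i$ is. Applying this symmetry to the eigenvalue set $\{0,\theta_1,\theta_2\}$ forces $\theta_1=2$ (the reflection of the trivial eigenvalue $0$) and hence $\theta_2=1$ as the only self-paired value. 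Thus the spectrum is exactly $\{0,1,2\}$, making $\G$ $\mathcal{L}$-integral; a triangle-free strongly regular graph is regular and has no repeated non-trivial eigenvalue structure compatible with this, and more directly its $\mathcal{L}$-eigenvalues are determined by its adjacency eigenvalues, which for a strongly regular graph are generically irrational, so the only surviving possibility is the complete bipartite graph.

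The main obstacle I anticipate is the final elimination of the strongly regular alternative in $\mathrm{(i)}\Rightarrow\mathrm{(iii)}$: I must confirm that no triangle-free strongly regular graph has $\mathcal{L}$-spectrum precisely $\{0,1,2\}$. The cleanest route is to use that a $k$-regular graph has $\mathcal{L}$-eigenvalue $1-\tfrac{\alpha}{k}$ for each adjacency eigenvalue $\alpha$, so the $\mathcal{L}$-eigenvalue $2$ corresponds to adjacency eigenvalue $-k$, forcing the graph to be bipartite; a connected bipartite strongly regular graph is complete bipartite, closing the argument. I would present this as the decisive step rather than invoking irrationality, since it is self-contained and uses only the regularity already available from the strongly regular hypothesis.
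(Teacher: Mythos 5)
Your proof is correct in substance, but it runs the equivalence cycle in the opposite direction from the paper and with different ingredients. The paper proves (i)$\Rightarrow$(ii) via Lemma \ref{basic}(ix) (exactly the symmetry argument you use inside your (i)$\Rightarrow$(iii) step), then gets (ii)$\Rightarrow$(iii) directly: integrality, connectedness, and the trace condition of Lemma \ref{basic}(ii) force the spectrum $\{[0]^1,[1]^{n-2},[2]^{1}\}$, whence $\Gamma$ is bipartite by Lemma \ref{basic}(viii), and Corollary \ref{diameter} (diameter two) immediately yields complete bipartite; (iii)$\Rightarrow$(i) is then declared clear. You instead prove (ii)$\Rightarrow$(i) by showing the eigenvalue $2$ must occur (via the two-eigenvalue characterization of complete graphs and Lemma \ref{basic}(iii)), and (i)$\Rightarrow$(iii) by invoking the triangle-free proposition (the first proposition of Section~4) and then eliminating the strongly regular alternative through the correspondence $\lambda = 1-\alpha/k$ between $\mathcal{L}$-eigenvalues and adjacency eigenvalues of $k$-regular graphs. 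The paper's route is more economical: it needs only Lemma \ref{basic} and Corollary \ref{diameter}, whereas yours leans on the heavier triangle-free proposition; on the other hand, your final step can be streamlined, since (i) already assumes bipartiteness, so the eigenvalue detour ($2 \leftrightarrow -k$) is redundant and you may pass directly from ``connected bipartite strongly regular graph'' to ``complete bipartite.''

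One pinprick in your (ii)$\Rightarrow$(i): you establish that $0$ and $2$ occur as eigenvalues, but then assert that ``the three distinct values $0,1,2$ give exactly three $\mathcal{L}$-eigenvalues'' without verifying that $1$ actually occurs. This needs the trace identity $\sum_i \lambda_i = n$ of Lemma \ref{basic}(ii), which pins the multiplicities to $1$, $n-2$, $1$ and in particular requires $n\geq 3$: for $n=2$ the graph $K_2$ satisfies (ii) and (iii) but not (i), since its spectrum is $\{0,2\}$. This edge case is equally unremarked in the paper (its ``(iii)$\Rightarrow$(i) is clear'' fails for $K_2$), so it is not a defect peculiar to your argument, but since your direction of the cycle is the one that makes the claim explicit, you should add the one-line trace computation.
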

\begin{proof} {First we show that {\rm (i)} implies {\rm (ii)}. Let $\G$ be bipartite with three
$\mathcal{L}$-eigenvalues. By {\rm (i)} and {\rm (ix)} of Lemma
\ref{basic}, it clearly follows that $\G$ has $\L$-eigenvalues 0, 1, and 2, and hence it is integral.

Next we show that {\rm (ii)} implies {\rm (iii)}. Let $\G$ be
integral. By Lemma \ref{basic}, the $\mathcal{L}$-spectrum of
$\G$ is $\{[0]^1, [1]^{n-2}, [2]^{1}\}$ and hence $\G$ is
bipartite. Because its diameter equals two, $\G$ is complete
bipartite. It is clear that we can conclude {\rm (i)} from {\rm
(iii)}.
 }\end{proof}
The property that complete bipartite graphs have two simple eigenvalues does not characterize them among
the graphs with three $\L$-eigenvalues, as we shall see later on.

\section{Biregular graphs}

In this section, we shall consider biregular graphs with three distinct
$\mathcal{L}$-eigenvalues. We call a graph with two distinct valencies $k_1$ and $k_2$ {\sl
$(k_1,k_2)$-regular}, or simply {\sl biregular}. The complete bipartite graphs of the previous section
are examples of biregular (or regular) graphs.
Characterization of the biregular graphs with three distinct
$\mathcal{L}$-eigenvalues seems to be difficult though, so we shall have a look at some
special cases (also in the next section).

\subsection{The valency partition}

A partition $\sigma=\{V_1,...,V_m\}$ of
the vertex set of a graph $\G$ is called an {\sl equitable
partition} if for all $i,j=1,...,m$, the number of neighbors in $V_j$ of $u\in V_i$
depends only on $i,j$, and not on $u$; we denote this number by $k_{ij}$. We call
the partition of the vertex set according to valencies the {\sl valency partition}.
The following can be obtained from Theorem \ref{main}.

\begin{lem}\label{biregular} Let $\G$ be a biregular graph with three
$\mathcal{L}$-eigenvalues. Then the valency partition is
equitable.
\end{lem}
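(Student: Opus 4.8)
The plan is to exploit property (i) of Theorem \ref{main}, which asserts that the normalized valency $\hat{d}_u$ depends only on $d_u$. Since $\G$ is biregular with valencies $k_1$ and $k_2$, let $\{V_1,V_2\}$ be the valency partition, where every vertex in $V_i$ has valency $k_i$. To prove equitability it suffices to show that for each vertex $u\in V_i$ the number of neighbors of $u$ lying in $V_1$ (equivalently, in $V_2$) is independent of the choice of $u$ within $V_i$; the four numbers $k_{ij}$ are then well-defined.

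First I would fix a vertex $u\in V_1$ and write $a$ for the number of its neighbors in $V_1$ and $b$ for the number in $V_2$, so that the normalized valency becomes $\hat{d}_u=a/k_1+b/k_2$. By Theorem \ref{main}(i) this equals $tk_1^2-(\theta_1-1)(\theta_2-1)k_1$, a quantity that does not depend on $u$. This gives one linear equation in the unknowns $a$ and $b$; the trivial identity $a+b=k_1$ gives a second.

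The key step is to observe that this $2\times 2$ linear system has an invertible coefficient matrix precisely because the two valencies differ. Solving $a/k_1+b/k_2=C$ together with $a+b=k_1$ yields $a(k_2-k_1)/(k_1k_2)=C-k_1/k_2$, and since $k_1\neq k_2$ the value of $a$ (and hence $b$) is uniquely determined by the constant right-hand side. Thus $a$ and $b$ are the same for every $u\in V_1$. Repeating the argument verbatim for $u\in V_2$ fixes the numbers of neighbors in $V_1$ and $V_2$ for the vertices of the second class as well, so all four quantities $k_{ij}$ are well-defined and the valency partition is equitable.

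There is no serious obstacle here; the only point requiring care is the non-degeneracy of the linear system, which is exactly where the biregularity hypothesis $k_1\neq k_2$ enters. Note that properties (ii) and (iii) of Theorem \ref{main} are not needed for this particular conclusion, since equitability of a two-part partition is controlled entirely by the constancy of the local valencies $\hat{d}_u$.
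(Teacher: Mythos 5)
Your proof is correct and is essentially identical to the paper's: both fix a vertex in a valency class, set up the two linear equations $k_{i1}+k_{i2}=k_i$ and $\frac{k_{i1}}{k_1}+\frac{k_{i2}}{k_2}=\hat{d}_u$ with $u$-independent right-hand sides coming from Theorem \ref{main}(i), and conclude that the solution is unique. Your only addition is to spell out explicitly that unique solvability rests on $k_1\neq k_2$, a point the paper leaves implicit.
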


\begin{proof} {Suppose $\G$ is $(k_1,k_2)$-regular, and let $V_i$, $i=1,2$, be the set of vertices of
valency $k_i$. Fix a vertex $u\in V_i$, and let $k_{ij}$ be the number of neighbors in $V_j$ of $u\in V_i$. It
follows that these numbers do not depend on the particular $u$ because they are determined by the equations
$k_{i1}+k_{i2}=k_i$ and $\frac{k_{i1}}{k_1}+\frac{k_{i2}}{k_2}=\hat{d}_u=tk_i^2-(\theta_1-1)(\theta_2-1)k_i$.
}
\end{proof}

\subsection{Projective planes}

To find more examples of graphs with three
$\mathcal{L}$-eigenvalues we let $\G$ be such a
$(k_1,k_2)$-regular graph, with valency partition
$\{V_1,V_2\}$, and we suppose that the induced subgraph $\G_1$
on $V_1$ is empty. By Lemma \ref{biregular}, $\{V_1,V_2\}$ is
an equitable partition and by Theorem \ref{main}, the number of
common neighbors of any two vertices in $V_1$ is a
constant $tk_1^2k_2$ (which is $k_2$ times the normalized number of common neighbors).
Hence we may assume
that the bipartite graph between $V_1$ and $V_2$ is the
incidence graph of a $2$-design $\D$. Now it is convenient to switch to notation
that is common in design theory. So we let $v=|V_1|$, $b=|V_2|$, $k=k_{21}$, $r=k_1$, and $\lambda=tk_1^2k_2$,
so that $\D$ is a $2$-$(v,k,\lambda)$ design with $b$ blocks and replication number $r$.
In case $V_1$ and $V_2$ have the same size,
then this design is symmetric, and we obtain the following.

\begin{prop}\label{projective} Let $\G$ be a non-bipartite biregular graph such that
the valency partition has parts of equal size, and
the induced graph on one of the parts is empty. Then $\G$ has three
$\mathcal{L}$-eigenvalues if and only if it is obtained from the incidence graph of a
projective plane by making any two vertices corresponding to the
lines adjacent. If the projective plane has line size $k$ and $v=k^2-k+1$ points, then the
non-trivial $\mathcal{L}$-eigenvalues of $\G$ are $\frac{v}{k^2}$ and $1+\frac1{k}$.
\end{prop}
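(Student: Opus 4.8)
The plan is to pass to the normalized adjacency matrix $N=I-\mathcal{L}=D^{-1/2}AD^{-1/2}$, whose eigenvalues are $1-\theta$ for each $\mathcal{L}$-eigenvalue $\theta$; thus $\G$ has three $\mathcal{L}$-eigenvalues $0,\theta_1,\theta_2$ precisely when $N$ has three eigenvalues $1,\nu_1,\nu_2$ with $\nu_i=1-\theta_i$. Since $\G_1$ is empty and $\{V_1,V_2\}$ is equitable by Lemma~\ref{biregular}, I write $A$ (hence $N$) in block form, with off-diagonal block the point--block incidence matrix $B$ of the symmetric $2$-$(v,k,\lambda)$ design $\D$ and diagonal block the adjacency matrix $C$ of the (necessarily regular) induced graph $\G_2$ on $V_2$; here $k_1=r=k$ and $k_2=k+k_{22}$. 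The main computational input will be the identities $BB^{\top}=B^{\top}B=(k-\lambda)I+\lambda J$ valid for symmetric designs.

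For the forward implication I would use the rank-one equation behind Theorem~\ref{main}, namely $(N-\nu_1 I)(N-\nu_2 I)=t\,(D^{1/2}\mathbf{j})(D^{1/2}\mathbf{j})^{\top}$, which vanishes on the subspace $\mathbf{j}^{\perp}\oplus\mathbf{j}^{\perp}$ orthogonal to $D^{1/2}\mathbf{j}$. Expanding the $(1,1)$ block there gives $\nu_1\nu_2=-(k-\lambda)/(kk_2)$, while the $(1,2)$ block, combined with the injectivity of $B$ on $\mathbf{j}^{\perp}$ (where $B^{\top}B=(k-\lambda)I$ and $k>\lambda$), forces $Cy=(\nu_1+\nu_2)k_2\,y$ for every $y\perp\mathbf{j}$. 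This is the crux of the argument and the step I expect to be the main obstacle: $C$ acts as a single scalar on $\mathbf{j}^{\perp}$, so being a symmetric $0$--$1$ matrix with zero diagonal and constant row sums it must equal $\mathbf{0}$ or $J-I$. The empty option makes $\G$ bipartite and is ruled out by hypothesis, so $\G_2=K_v$; consequently $k_2=k+v-1$ and, the scalar being $-1$, $\nu_1+\nu_2=-1/k_2$.

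It remains to force $\lambda=1$. Because $N$ has only the eigenvalues $1,\nu_1,\nu_2$ and the largest, $1$, is simple as $\G$ is connected, the second eigenvalue $\nu_0$ of $N$ on the invariant plane $\mathrm{span}\{(\mathbf{j}_v,\mathbf{0}),(\mathbf{0},\mathbf{j}_b)\}$ must lie in $\{\nu_1,\nu_2\}$. Reading $\nu_0$ off the trace of the $2\times2$ quotient gives $\nu_0=(v-1)/k_2-1=-k/k_2$; feeding $\nu_0\in\{\nu_1,\nu_2\}$ into the two relations above together with the symmetric-design identity $\lambda(v-1)=k(k-1)$ yields, after simplification, $(1-\lambda)(k^2-k+\lambda)=0$. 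Since $k^2-k+\lambda>0$, this gives $\lambda=1$, so $\D$ is a projective plane of order $k-1$ with $v=k^2-k+1$, and $\G$ is exactly the stated graph.

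For the converse and the eigenvalue formula I would simply run the computation in reverse. Starting from a projective plane with $\G_2=K_v$ one has $k_2=k^2$, and $\R^{v+b}$ decomposes into $D^{1/2}\mathbf{j}$, the plane $\mathrm{span}\{(\mathbf{j}_v,\mathbf{0}),(\mathbf{0},\mathbf{j}_b)\}$, and the $N$-invariant two-dimensional spaces $\mathrm{span}\{(By,\mathbf{0}),(\mathbf{0},y)\}$ for $y\perp\mathbf{j}$. Checking the action of $N$ on each piece shows that its eigenvalues are exactly $1$, $(k-1)/k^2$ and $-1/k$; translating back through $\theta=1-\nu$ gives the three $\mathcal{L}$-eigenvalues $0$, $v/k^2$ and $1+1/k$, as claimed.
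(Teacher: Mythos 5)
Your route is genuinely different from the paper's and, except for one point, correct. The paper never block-decomposes the rank-one identity; it extracts the scalar equations of Theorem \ref{main} (its equations (\ref{e1})--(\ref{e5})), derives $t=1/(k+a)^2$ and $\lambda=k^2/(k+a)$, and proves $\G_2$ complete by a purely combinatorial argument ($\mu_{12}=k$ means every neighbour of a point of $B_1$ not on $B_2$ is adjacent to $B_2$). Your spectral replacement --- $C$ acts as a single scalar on $\mathbf{j}^{\perp}$, hence $C=\mathbf{0}$ or $J-I$ --- and your derivation of $\lambda=1$ from $\nu_1\nu_2=-(k-\lambda)/(kk_2)$, $\nu_1+\nu_2=-1/k_2$, and $\nu_0=-k/k_2\in\{\nu_1,\nu_2\}$ are correct (the algebra does reduce to $(\lambda-1)(k^2-k+\lambda)=0$), as is the converse computation via the invariant planes $\mathrm{span}\{(By,\mathbf{0}),(\mathbf{0},y)\}$.

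The genuine gap is the parenthetical claim $k>\lambda$, on which your crux step rests and which is not a free fact in this setting. Here $k>\lambda$ is equivalent to the design not being the complete design $B=J$, i.e., to the bipartite graph between $V_1$ and $V_2$ not being complete bipartite --- and nothing in the hypotheses excludes this a priori: the join of $\overline{K_v}$ with a nonempty regular graph on $v$ vertices is non-bipartite, biregular with equal parts, and has $\G_1$ empty. In that case $B$ is not injective on $\mathbf{j}^{\perp}$, and in fact your $(1,2)$ block vanishes identically on $\mathbf{j}^{\perp}$ (both $JCy$ and $Jy$ are zero for $y\perp\mathbf{j}$), so it yields no constraint on $C$ whatsoever; the dichotomy $C\in\{\mathbf{0},J-I\}$ is unreachable by that argument. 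The paper rules this case out explicitly: its valency equations give $\lambda=k^2/(k+a)$, so $\lambda=k$ would force $a=0$ and $\G$ bipartite. You can also close the gap inside your own framework, but only by invoking information your proof never touches: if $B=J$, then every $(x,\mathbf{0})$ with $x\perp\mathbf{j}$ is a $0$-eigenvector of $N$, so $0\in\{\nu_1,\nu_2\}$, and the quotient plane supplies the other value $-v/k_2$; the $(2,2)$ block on $\mathbf{j}^{\perp}$ then forces every eigenvalue of $C$ on $\mathbf{j}^{\perp}$ into $\{0,-v\}$, and since all adjacency eigenvalues of a graph on $v$ vertices have modulus at most $v-1$, the matrix $C$ acts as $0$ on $\mathbf{j}^{\perp}$, giving $C=(a/v)J$ and hence $a=0$, contradicting non-bipartiteness. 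With that case added, your proof is complete.
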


\begin{proof} {We continue with the above notation and arguments, and assume that $\G$ has three $\mathcal{L}$-eigenvalues. The design $\D$
is a symmetric $2$-$(v,k,\lambda)$ design, and $r=k$. Furthermore, let $a=k_{22}$. From Theorem \ref{main}, we obtain
the equations
\begin{equation}\label{e1}
\frac{k}{k+a}=tk^2-(\theta_1-1)(\theta_2-1)k,
\end{equation}
\begin{equation}\label{e2}
\frac{\lambda}{k+a}=tk^2,
\end{equation}
\begin{equation}\label{e3}
1+\frac{a}{k+a}=t(k+a)^2-(\theta_1-1)(\theta_2-1)(k+a).
\end{equation}
By combining these three equations, we find that $t=\frac{1}{(k+a)^2}$ (note that $a \neq 0$ because $\G$ is not bipartite) and $\lambda=\frac{k^2}{k+a}$. The latter implies that $\lambda \neq k$, otherwise $\G$ would be regular and bipartite. Thus, $\D$ is not a complete design. Because $\D$ is also not empty, we obtain two more equations from Theorem \ref{main}:
\begin{equation}\label{e4}
\frac{\mu_{12}}{k+a}=tk(k+a),
\end{equation}
\begin{equation}\label{e5}
\frac{\lambda_{12}}{k+a}=tk(k+a)+2-\theta_1-\theta_2,
\end{equation}
where $\lambda_{12}$ and $\mu_{12}$ are the numbers of common neighbors of a vertex in $V_1$ and a vertex in $V_2$, depending on whether they are adjacent or not, respectively. It follows that $\mu_{12}=k$, and we claim that this implies that the induced graph $\G_2$ on $V_2$ is complete. To show this claim, consider two blocks (vertices in $V_2$)
$B_1$ and $B_2$. Because the design is not complete, there is a point $P$ that is incident with $B_1$, but not with $B_2$. Because $\mu_{12}=k$, every neighbor of $P$ is also a neighbor of $B_2$; in particular this holds for $B_1$, which proves our claim. Thus, $a=k_{22}=v-1$, $k_2=k+v-1$, and $\lambda=\frac{k^2}{k+v-1}$. When the latter is combined with the property that $\lambda(v-1)=k(k-1)$ (because $\D$ is a symmetric design), we obtain that $v=k^2-k+1$ and $\lambda=1$, i.e., $\D$ is a projective plane, and $\G$ is as stated. Moreover, $\lambda_{12}=k-1$, and so (\ref{e4}) and (\ref{e5}) imply that $\theta_1+\theta_2-2=\frac{1}{k+v-1}=\frac1{k^2}$. Together with the equation $(\theta_1-1)(\theta_2-1)=\frac{k}{(k+v-1)^2}-\frac1{(k+v-1)}=\frac1{k^3}-\frac1{k^2}$, which follows from (\ref{e1}), this determines the non-trivial $\mathcal{L}$-eigenvalues. On the other hand, if $\G$ is as stated, then the equations from Theorem \ref{main} all hold, including a final one that was not used so far:
\begin{equation}\label{e6}
\frac{1}{k}+\frac{v-2}{k+v-1}=t(k+v-1)^2+2-\theta_1-\theta_2.
\end{equation}
}\end{proof}

\subsection{Quasi-symmetric designs}

\noindent If in the above discussion the design $\D_{\G}$ is not symmetric, then it seems hard to
characterize $\G$. In this case $\G_2$ cannot be empty (unless $\G$ is complete bipartite) or complete.
It seems natural to consider the case that $\G_2$ is strongly regular, and indeed, there are such examples as we shall see. In this case, it follows from Theorem \ref{main} that there are two block intersection sizes,
depending on whether the blocks are adjacent or not. So $\D_{\G}$ is a quasi-symmetric
design and $\G_2$ is one of its (strongly regular) block graphs. We obtain the below
proposition. Here we use the notation that is common for quasi-symmetric designs (cf. \cite{SS}, \cite{Handbook}).
That is, $\D_{\G}$ is a $2$-$(v,k,\lambda)$ design with replication number $r=\lambda \frac{v-1}{k-1}$ and two block intersection sizes $x$ and $y$. We do however not make the usual convention that $y>x$. The corresponding
block graph $\G_2$, where two blocks are adjacent if they intersect in $y$ points, is strongly regular with parameters $(b,a,c,d)$, with $b=\frac{vr}k$, $a=\frac{(r-1)k-x(b-1)}{y-x}$, $d=a+\rho_1\rho_2$, $c=d+\rho_1+\rho_2$. Here $\rho_1=\frac{r-\lambda-k+x}{y-x}$ and $\rho_2=\frac{x-k}{y-x}$ are the (usual) non-trivial eigenvalues of $\G_2$.
An important property in the following is that for a point-block pair $(P,B)$, the number of
blocks $B' \neq B$ incident with $P$ and intersecting $B$ in $y$ points equals $\frac{(\lambda-1)(k-1) -(x-1)(r-1)}{y-x}$ or $\frac{\lambda k -xr}{y-x}$, depending on whether $P$ is incident to $B$ or not, respectively (cf. \cite[Thm. 3.2]{GS}).

\begin{prop}\label{quasi1} Let $\G$ be a biregular graph with valency partition
$(V_1,V_2)$ such that $\G_1$ is empty, the edges between
$V_1$ and $V_2$ form the incidence relation of a quasi-symmetric design
$\D_{\G}$, and $\G_2$ is the corresponding block graph, with notation as above.
Then $\G$ has three $\mathcal{L}$-eigenvalues $0,\theta_1,\theta_2$ if and
only if
$$\begin{array}{rcl}
\frac{r}{k+a}&=&tr^2-(\theta_1-1)(\theta_2-1)r,\\
\frac{\lambda}{k+a}&=&tr^2,\\
\frac kr+\frac{a}{k+a}&=&t(k+a)^2-(\theta_1-1)(\theta_2-1)(k+a),\\
\frac{x}{r}+\frac{d}{k+a}&=&t(k+a)^2,\\
\frac{y}{r}+\frac{c}{k+a}&=&t(k+a)^2+2-\theta_1-\theta_2,\\
\frac{\lambda k-xr}{(y-x)(k+a)}&=&tr(k+a),\\
\frac{(\lambda-1)(k-1)-(x-1)(r-1)}{(y-x)(k+a)}&=&tr(k+a)+2-\theta_1-\theta_2,
\end{array}$$
where $t=\frac{\theta_1\theta_2}{vr+b(k+a)}$.
\end{prop}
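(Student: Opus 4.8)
The plan is to apply Theorem \ref{main} directly, instantiating its three generic identities for every vertex type and every vertex-pair type occurring in $\G$. The first step is to record the two valencies. A vertex of $V_1$ is a point, incident to exactly $r$ blocks, so it has valency $r$; a vertex of $V_2$ is a block, incident to $k$ points and (as a vertex of the strongly regular block graph $\G_2$) adjacent to $a$ further blocks, so it has valency $k+a$. Hence $2e=vr+b(k+a)$, which yields the stated value $t=\theta_1\theta_2/(2e)$.

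Next I would evaluate condition (i) of Theorem \ref{main}, which must hold for all vertices and therefore splits into the two valency classes. For a point $u$ all $r$ neighbors lie in $V_2$ and have valency $k+a$, so $\hat d_u=r/(k+a)$, giving the first equation. For a block $u$ the $k$ incident points have valency $r$ and the $a$ adjacent blocks have valency $k+a$, so $\hat d_u=k/r+a/(k+a)$, giving the third equation. These quantities are genuinely constant on each part, by the replication number for points and by the regularity of $\G_2$ for blocks.

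The remaining five equations come from conditions (ii) and (iii) applied to the three non-adjacent and two adjacent pair types (the point--point adjacent case is vacuous since $\G_1$ is empty). Two distinct points share their $\lambda$ common blocks and no common point, so $\hat\mu_{uv}=\lambda/(k+a)$, the second equation. Two blocks meeting in $x$ points are non-adjacent in $\G_2$, with common neighbors consisting of those $x$ points and the $d$ common neighbors in $\G_2$, giving the fourth equation; two blocks meeting in $y$ points are adjacent, with $y$ common points and $c$ common neighbors, giving the fifth. The last two, mixed, pairs are the delicate ones: for a non-incident point--block pair $(P,B)$ the common neighbors are exactly the blocks through $P$ meeting $B$ in $y$ points, and for an incident pair they are the blocks $B'\neq B$ through $P$ meeting $B$ in $y$ points. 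Their counts $\frac{\lambda k-xr}{y-x}$ and $\frac{(\lambda-1)(k-1)-(x-1)(r-1)}{y-x}$ are precisely the quasi-symmetric incidence numbers quoted before the proposition (from \cite[Thm. 3.2]{GS}); dividing by the valency $k+a$ and invoking (iii) and (ii) respectively gives the sixth and seventh equations.

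The main point to get right is this bookkeeping of common neighbors for the mixed point--block pairs, where one must exploit the quasi-symmetric structure rather than mere biregularity, and must remember that a point has no point-neighbors because $\G_1$ is empty. Once both vertex classes and all five pair types are accounted for, the list of seven equations is a complete and faithful translation of conditions (i)--(iii) of Theorem \ref{main}; since that theorem is an equivalence, so is the present statement, with no further computation needed.
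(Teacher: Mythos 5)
Your proof is correct and takes exactly the route of the paper, whose entire proof reads ``This follows immediately from Theorem \ref{main}''; you have simply made explicit the bookkeeping (the two valencies $r$ and $k+a$, the value of $2e$, and the five pair types with their common-neighbor counts from the design and block-graph structure) that the paper leaves to the reader.
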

\begin{proof}{ This follows immediately from Theorem \ref{main}.
}
\end{proof}
Any Steiner system, i.e., a $2$-$(v,k,1)$ design,
is a quasi-symmetric design (if $b > v$) with $y=1$, $x=0$, $r=\frac{v-1}{k-1}$, and block graph
with parameters $(\frac{v(v-1)}{k(k-1)},(r-1)k,r-2+(k-1)^2,k^2)$. These parameters satisfy the
above conditions and so each Steiner system gives a graph with three $\L$-eigenvalues, the non-trivial
ones being $1+\frac{1}{r}$ and $1-\frac1k+\frac1{rk}$.
The $2$-$(v,2,1)$
design of all pairs gives a graph that can also be obtained from
the triangular graph $T(v+1)$ by removing all edges in a maximal
clique. Note also that the graphs of Proposition \ref{projective} are degenerate cases of this construction.

Another large family of quasi-symmetric designs, the multiples of symmetric designs (i.e., each block is repeated the same number of times) do not satisfy the conditions.

Among the residuals of biplanes, only the (three) $2$-$(10,4,2)$ designs satisfy the above conditions, with $x=2$ and $y=1$, and give graphs on $25$ vertices with three $\L$-eigenvalues $0,\frac56, \frac43$. The graph $\G_2$ is the triangular graph $T(6)$.

Another example is obtained from the unique quasi-symmetric $2$-$(21,6,4)$ design with $b=56, r=16, x=2, y=0$.
Here $\G_2$ is the Gewirtz graph, and $\G$ has $\L$-eigenvalues $0,\frac78, \frac{11}8$. Unfortunately or not, this graph on 77 vertices is strongly regular, as is well-known, cf. \cite{GS, BH}.

Instead of taking $\G_1$ empty, we now let it be
complete. Also in this case the graph between $V_1$ and $V_2$ is the
incidence graph of a $2$-design $\D_{\G}$,
and we find some new examples by considering the case that $\D_{\G}$ is a quasi-symmetric
design and $\G_2$ is a strongly regular graph corresponding
to $\D_{\G}$. The following is the analogue of Proposition \ref{quasi1}.

\begin{prop} Let $\G$ be a biregular graph with valency partition
$(V_1,V_2)$ such that $\G_1$ is complete, the edges between
$V_1$ and $V_2$ form the incidence relation of a quasi-symmetric design
$\D_{\G}$, and $\G_2$ is the corresponding block graph, with notation as above.
Then $\G$ has three $\mathcal{L}$-eigenvalues $0,\theta_1,\theta_2$ if and
only if
$$\begin{array}{rcl}
\frac{v-1}{v-1+r}+\frac{r}{k+a}&=&t(v-1+r)^2-(\theta_1-1)(\theta_2-1)(v-1+r),\\
\frac{v-2}{v-1+r}+\frac{\lambda}{k+a}&=&t(v-1+r)^2+2-\theta_1-\theta_2,\\
\frac k{v-1+r}+\frac{a}{k+a}&=&t(k+a)^2-(\theta_1-1)(\theta_2-1)(k+a),\\
\frac{x}{v-1+r}+\frac{d}{k+a}&=&t(k+a)^2,\\
\frac{y}{v-1+r}+\frac{c}{k+a}&=&t(k+a)^2+2-\theta_1-\theta_2,\\
\frac{k}{v-1+r}+\frac{\lambda k-xr}{(y-x)(k+a)}&=&t(v-1+r)(k+a),\\
\frac{k-1}{v-1+r}+\frac{(\lambda-1)(k-1)-(x-1)(r-1)}{(y-x)(k+a)}&=&t(v-1+r)(k+a)+2-\theta_1-\theta_2,
\end{array}$$
where $t=\frac{\theta_1\theta_2}{v(v-1+r)+b(k+a)}$.
\end{prop}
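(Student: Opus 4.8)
The plan is to mirror the one-line proof of Proposition \ref{quasi1}: everything follows from Theorem \ref{main} applied vertex-type by vertex-type, the only novelty being that now the clique on $V_1$ contributes both to the valencies and to the common-neighbour counts. The hypotheses (a quasi-symmetric design $\D_{\G}$ with strongly regular block graph $\G_2$, and $\G_1$ complete) guarantee that the normalized valency is constant within each of the two vertex types, and that the normalized number of common neighbours is constant within each type of vertex pair; this is exactly what reduces the ``for all vertices / pairs'' conditions of Theorem \ref{main} to the finitely many stated equations. So I would first record the two valencies: since $\G_1$ is complete, a point is adjacent to the $v-1$ other points and to the $r$ blocks through it, hence has valency $v-1+r$, while a block is adjacent to its $k$ points and to its $a$ neighbours in $\G_2$, hence has valency $k+a$ (as in Proposition \ref{quasi1}). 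Summing over all vertices gives $2e=v(v-1+r)+b(k+a)$, which is the stated value in $t=\frac{\theta_1\theta_2}{2e}$.

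Next I would translate condition (i), $\hat{d}_u=td_u^2-(\theta_1-1)(\theta_2-1)d_u$, for each type. A point sees $v-1$ points of valency $v-1+r$ and $r$ blocks of valency $k+a$, giving $\hat{d}_u=\frac{v-1}{v-1+r}+\frac{r}{k+a}$ (first equation); a block sees $k$ points of valency $v-1+r$ and $a$ blocks of valency $k+a$, giving $\hat{d}_u=\frac{k}{v-1+r}+\frac{a}{k+a}$ (third equation). Then I would run through conditions (ii) and (iii), splitting each common-neighbour count into a point-part (denominator $v-1+r$) and a block-part (denominator $k+a$), reading the block-part off the strong regularity of $\G_2$ (using $c$ for adjacent and $d$ for non-adjacent blocks) and off the quasi-symmetric point-block intersection numbers cited just before the proposition. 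For adjacent pairs (condition (ii)): two adjacent points share the $v-2$ remaining points and the $\lambda$ blocks on both (second equation); two adjacent blocks share their $y$ common points and $c$ common blocks (fifth equation); an incident pair $(P,B)$ shares the $k-1$ other points of $B$ together with the $\frac{(\lambda-1)(k-1)-(x-1)(r-1)}{y-x}$ blocks through $P$ meeting $B$ in $y$ points (seventh equation). For non-adjacent pairs (condition (iii)): two non-adjacent blocks share their $x$ common points and $d$ common blocks (fourth equation); a non-incident pair $(P,B)$ shares all $k$ points of $B$ together with the $\frac{\lambda k-xr}{y-x}$ blocks through $P$ meeting $B$ in $y$ points (sixth equation). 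Since $\G_1$ is complete there are no non-adjacent point pairs, so these seven equations exhaust the conditions of Theorem \ref{main}, and the equivalence follows in both directions from the fact that Theorem \ref{main} is itself an ``if and only if''.

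The only real care is the bookkeeping of the point-contributions that are absent from Proposition \ref{quasi1}: with $\G_1$ complete, the points of $V_1$ now become common neighbours of the appropriate pairs, so each of the three ``mixed'' equations picks up an extra fraction with denominator $v-1+r$, namely $\frac{v-2}{v-1+r}$ for two adjacent points, $\frac{k-1}{v-1+r}$ for an incident point-block pair, and $\frac{k}{v-1+r}$ for a non-incident point-block pair. The main place to slip is precisely these numerators: I would verify each off-by-one by checking whether $P$, $B$, or a shared point is itself being (correctly) excluded from the count, since completeness of $\G_1$ makes every other point a common neighbour of $P$ and forces exactly this $\pm1$ difference from the empty-$\G_1$ case. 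Once those counts are confirmed, the identification with the seven displayed equations is mechanical.
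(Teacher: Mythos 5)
Your proposal is correct and follows exactly the route the paper intends: the paper states this proposition without proof as ``the analogue of Proposition \ref{quasi1},'' whose proof is simply that it follows immediately from Theorem \ref{main}, and your vertex-type-by-vertex-type translation (valencies $v-1+r$ and $k+a$, the value of $2e$, and the seven common-neighbour counts including the extra point-contributions $\frac{v-2}{v-1+r}$, $\frac{k-1}{v-1+r}$, $\frac{k}{v-1+r}$ forced by $\G_1$ being complete) is precisely the bookkeeping that makes this immediate claim rigorous.
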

A multiple of a projective plane, i.e., a design obtained from a projective plane by repeating
each line $\lambda$ times, is a quasi-symmetric design with parameters
2-$(k^2-k+1,k,\lambda)$, with $r=\lambda k$, $x=1$, $y=k$, $a=\lambda-1$,
and it satisfies the above conditions. Here $\G_2$ is a disjoint union of cliques of size $\lambda$, and the obtained graph $\G$ has non-trivial $\L$-eigenvalues $\frac{v}{k^2+k(m-1)}$ and $1+\frac1{k+m-1}$. This construction is again a generalization of the construction in
Proposition \ref{projective}.

Other attempts to construct biregular graphs with three $\L$-eigenvalues could be inspired by the papers by Haemers and Higman on strongly regular graphs with a strongly regular decomposition \cite{HH} and by Higman on strongly regular designs \cite{H}, but we have not worked this out.

\section{Cones}

A {\sl cone} over a graph $\G'$ is a graph obtained by adjoining a new
vertex to all vertices of $\G'$, i.e., it is a graph which has a vertex of valency $n-1$.

\begin{lem}\label{conebi} Let $\G$ be a cone over $\G'$.
If $\G$ has three $\mathcal{L}$-eigenvalues then $\G'$ is regular or biregular,
and the valency partition of $\G$ is equitable.
\end{lem}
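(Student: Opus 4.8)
The plan is to apply Theorem \ref{main} to the cone $\G$, exploiting that the apex is adjacent to every other vertex. Write $u_0$ for the apex, so $d_{u_0}=n-1$, and for each vertex $v$ of $\G'$ note that its valency in $\G$ is $d_v=d'_v+1$, where $d'_v$ is its valency in $\G'$. Since $\G$ has three $\L$-eigenvalues $0,\theta_1,\theta_2$, I may use the three identities of Theorem \ref{main}; throughout, $t=\theta_1\theta_2/2e\neq 0$ because $\theta_1,\theta_2$ are the nonzero eigenvalues.

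First I would show that $\G'$ has at most two valencies. The key observation is that, since $u_0$ is adjacent to all other vertices, the common neighbors of $u_0$ and a vertex $v$ of $\G'$ are exactly the $\G'$-neighbors of $v$, so that $\hat{d}_v=\frac{1}{n-1}+\hat{\lambda}_{u_0v}$. Substituting Theorem \ref{main}{\rm (i)} for $\hat{d}_v$ and Theorem \ref{main}{\rm (ii)} for $\hat{\lambda}_{u_0v}$ (with $d_{u_0}=n-1$) turns this into the single quadratic
\[
t\,d_v^2-\bigl((\theta_1-1)(\theta_2-1)+t(n-1)\bigr)d_v-\Bigl(\tfrac{1}{n-1}+2-\theta_1-\theta_2\Bigr)=0,
\]
whose coefficients do not depend on $v$. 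As $t\neq 0$ this equation has at most two solutions, so the valencies $d_v$, and hence the valencies $d'_v=d_v-1$ in $\G'$, take at most two values; thus $\G'$ is regular or biregular.

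For the equitability of the valency partition $\sigma$ of $\G$, I would run the degrees-of-freedom argument of Lemma \ref{biregular}. Since $\G$ is not complete (complete graphs have only two $\L$-eigenvalues), it is not regular, so $\G$ has either two or three distinct valencies. If $\G$ has exactly two distinct valencies, then for a vertex $u$ in either class the two linear equations $\sum_j k_{ij}=d_u$ and $\sum_j k_{ij}/k_j=\hat{d}_u$ (the latter with $\hat{d}_u$ fixed by $d_u$ through Theorem \ref{main}{\rm (i)}) have a nonsingular $2\times 2$ coefficient matrix, because the two valencies are distinct; hence the $k_{ij}$ are determined by the class of $u$. If instead $\G$ has three distinct valencies, then both $\G'$-valencies are strictly below $n-1$, so the apex class $\{u_0\}$ is a singleton and every non-apex vertex has exactly one neighbor in it; using this third relation $k_{i0}=1$ reduces the remaining system to the same nonsingular $2\times 2$ situation for the two $\G'$-classes, while the singleton apex class is trivially equitable. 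In all cases $\sigma$ is equitable.

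The main obstacle I anticipate is the bookkeeping in the equitability step rather than any deep difficulty: one must handle the possibility that a vertex of $\G'$ is adjacent to all of $\G'$ (valency $n-1$ in $\G$), which merges it into the apex's valency class and lowers the number of parts to two. Verifying that the singleton apex class supplies the extra relation $k_{i0}=1$ needed when there are three classes---and that two classes suffice for the plain $2\times 2$ argument---is exactly the point where the adjacency of the apex to all vertices must be exploited.
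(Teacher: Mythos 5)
Your proof is correct and follows essentially the same route as the paper: the identity $\hat{d}_w=\hat{\lambda}_{wv}+\frac1{n-1}$ at the apex combined with Theorem \ref{main} yields a fixed quadratic in $d_w$ (with leading coefficient $t\neq 0$), giving regularity or biregularity of $\G'$, and equitability follows by the counting argument of Lemma \ref{biregular}. Your explicit case split between two and three valency classes, with the relation $k_{i0}=1$ for the singleton apex class, is precisely the detail the paper compresses into ``can be proven in a similar way as in Lemma \ref{biregular}.''
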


\begin{proof} {Let $v$ be a vertex of valency $n-1$, and $W$ be the set of remaining vertices, so that
$\G'$ is the induced graph on $W$. From Theorem \ref{main} we
find that $\hat{d}_w=td_w^2-(\theta_1-1)(\theta_2-1)d_w$ and
$\hat{\lambda}_{wv}=td_w(n-1)+2-\theta_1-\theta_2$ for $w \in
W$. Because in this case
$\hat{d}_w=\hat{\lambda}_{wv}+\frac1{n-1}$, we obtain a
quadratic equation for $d_w$, which shows that $\G'$ is regular
or biregular. That the valency partition is equitable can be
proven in a similar way as in Lemma \ref{biregular}.}
\end{proof}

\begin{prop}\label{cone} Let $\G$ be a cone over a regular graph $\G'$.
Then $\G$ has three $\mathcal{L}$-eigenvalues if and only if
$\G'$ is a disjoint union of (at least two) cliques of the same
size $d$, say. In this case, the non-trivial $\L$-eigenvalues
are $\frac1d$ and $1+\frac1d$.
\end{prop}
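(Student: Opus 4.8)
The plan is to apply Theorem \ref{main} in both directions, treating the cone $\G$ as a biregular graph whose two valency classes are the apex $v$ and the vertex set $W$ of $\G'$. First I would fix notation: suppose $\G'$ is $\delta$-regular on $m=n-1$ vertices. Then in $\G$ the apex has valency $d_v=m$, while every $w\in W$ has valency $d_w=\delta+1$; counting edges gives $2e=m(\delta+2)$, so $t=\theta_1\theta_2/(m(\delta+2))$.

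For the forward direction, I would write down all instances of the three identities of Theorem \ref{main}. Since $v$ is adjacent to every vertex of $W$, the common neighbors of two vertices $w_1,w_2\in W$ consist of $v$ together with their common neighbors inside $\G'$. Writing $\lambda'_{w_1w_2}$ (resp. $\mu'_{w_1w_2}$) for the latter when $w_1,w_2$ are adjacent (resp. non-adjacent) in $\G'$, conditions (ii) and (iii) read $\frac1m+\frac{\lambda'_{w_1w_2}}{\delta+1}=t(\delta+1)^2+2-\theta_1-\theta_2$ and $\frac1m+\frac{\mu'_{w_1w_2}}{\delta+1}=t(\delta+1)^2$. Since the right-hand sides are constants, the numbers $\lambda'_{w_1w_2}$ and $\mu'_{w_1w_2}$ are constants $\lambda'$ and $\mu'$, so $\G'$ is a regular graph with constant numbers of common neighbors for adjacent and for non-adjacent pairs.

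The crux is to pin down $\lambda'$. I would combine condition (i) at $v$, namely $\frac{1}{\delta+1}=tm-(\theta_1-1)(\theta_2-1)$, with condition (ii) for the adjacent pair $(v,w)$ — whose common neighbors are exactly the $\delta$ neighbors of $w$ in $\G'$, so that $\hat\lambda_{vw}=\delta/(\delta+1)$ — together with conditions (i), (ii), (iii) at a vertex $w\in W$. Eliminating $t$, $2-\theta_1-\theta_2$, and $(\theta_1-1)(\theta_2-1)$ among these relations, the algebra should collapse to $\lambda'=\delta-1$; this elimination is the main obstacle, while everything else is bookkeeping. Once $\lambda'=\delta-1$ is in hand, a short combinatorial argument finishes the structure: for adjacent $w_1,w_2$ all $\delta-1$ neighbors of $w_1$ other than $w_2$ must also be neighbors of $w_2$, so the closed neighborhood of any vertex of $\G'$ is a clique and a connected component; hence $\G'$ is a disjoint union of cliques of size $d=\delta+1$. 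There must be at least two such cliques, since a single clique would make $\G$ complete, contradicting that it has three (rather than two) $\mathcal{L}$-eigenvalues.

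For the converse, assume $\G'$ is a disjoint union of $\ell\geq 2$ cliques of size $d$, so $m=\ell d$ and each $w\in W$ has $\G$-valency $d_w=d$. Setting $\theta_1=1/d$ and $\theta_2=1+1/d$ gives $\theta_1\theta_2=(d+1)/d^2$ and $t=1/(d^2m)$, together with $(\theta_1-1)(\theta_2-1)=-(d-1)/d^2$ and $2-\theta_1-\theta_2=(d-2)/d$. I would then verify the three identities of Theorem \ref{main} directly for each vertex-pair type — the apex, two adjacent vertices of $W$ lying in a common clique, the apex paired with a vertex of $W$, and two non-adjacent vertices of $W$ in different cliques — each check reducing to a routine identity. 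Since $1/d$ and $1+1/d$ are distinct and nonzero, Theorem \ref{main} yields that $\G$ has exactly the three $\mathcal{L}$-eigenvalues $0,\,1/d,\,1+1/d$, as claimed.
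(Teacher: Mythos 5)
Correct, and essentially the paper's own approach: the paper likewise feeds the cone structure into Theorem \ref{main} and eliminates among the resulting equations, obtaining $t=\frac1{(n-1)d^2}$ from condition (i) at the apex and at a vertex of $W$, whence condition (iii) gives $\mu'=0$ --- your variant instead lands on $\lambda'=\delta-1$ --- and either conclusion forces $\G'$ to be a disjoint union of cliques of size $d=\delta+1$, with the converse being a direct verification in both treatments. The elimination you flagged as the main obstacle does collapse as claimed: conditions (i) at the apex and at $w\in W$ give $t=1/(md^2)$ (valid since $\G$ is not complete, so $d\neq m$), condition (ii) for the pair $(v,w)$ then yields $2-\theta_1-\theta_2=(d-2)/d$, and condition (ii) for adjacent pairs in $W$ combined with these values yields $\lambda'=d(2-\theta_1-\theta_2)=d-2=\delta-1$.
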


\begin{proof} {As before, let $v$ be a vertex of valency $n-1$, and $W$ be the set of remaining vertices,
which now have constant valency $d$, say. If $\G$ has three $\mathcal{L}$-eigenvalues, then by
Theorem \ref{main}, $\G'$ is a strongly regular graph with parameters $(n-1,d-1,\lambda,\mu)$, where
$\frac1{n-1}+\frac{\lambda}d=td^2+2-\theta_1-\theta_2$ (the
normalized number of common neighbors of two adjacent vertices
$w,w'\neq v$) and $\frac1{n-1}+\frac{\mu}d=td^2$ (the
normalized number of common neighbors of two non-adjacent vertices
$w,w'\neq v$).

Moreover, by combining
$\frac{n-1}{d}=t(n-1)^2-(\theta_1-1)(\theta_2-1)(n-1)$ (the
normalized valency of $v$) and
$\frac{d-1}{d}+\frac1{n-1}=td^2-(\theta_1-1)(\theta_2-1)d$ (the
normalized valency of $w \in W$), we obtain that
$t=\frac1{(n-1)d^2}$, and this implies that $\mu=0$. Thus,
$\G'$ is a disjoint union of cliques of size $d$. Therefore
$\lambda=d-2$, and the above equations now show that
$\{\theta_1,\theta_2\}=\{\frac1d,1+\frac1d\}$.

On the other hand, by checking all equations in Theorem
\ref{main}, it follows that the cone over a disjoint union of
$d$-cliques indeed has three $\L$-eigenvalues.}
\end{proof}
Examples of cones over biregular graphs can be constructed
using certain strongly regular graphs, as we shall see next.
Recall that a {\sl conference graph} is a strongly regular
graph with parameters $(n,k,\lambda,\mu)$ with $n=2k+1$,
$k=2\mu$, and $\lambda=\mu-1$.

\begin{prop}\label{valency1} Let $\Gamma$ be a graph with minimum valency one.
Then $\Gamma$ has three $\L$-eigenvalues if and only if it is a star
graph or a cone over the disjoint union of an isolated vertex and
a conference graph. The latter has non-trivial $\L$-eigenvalues $\frac{n \pm \sqrt{n-2}}{n-1}$,
each with multiplicity $\frac{n-1}2$.
\end{prop}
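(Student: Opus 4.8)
The plan is to build everything on a single vertex of valency one. For the forward direction, let $u$ be a vertex with $d_u=1$ and let $v$ be its unique neighbour. The decisive first step is to show that $v$ is adjacent to all other vertices, so that $\G$ is a cone over $\G'=\G-v$. Indeed, any $z\neq u,v$ is non-adjacent to $u$ (the only neighbour of $u$ is $v$), and the only candidate for a common neighbour of $u$ and $z$ is $v$; hence Theorem \ref{main}(iii) gives $\hat{\mu}_{uz}=td_z$, while the left-hand side is $1/d_v$ when $z\sim v$ and $0$ otherwise. Since $t=\frac{\theta_1\theta_2}{2e}>0$ and $\G$ has no isolated vertices, the value $0$ is impossible, forcing $z\sim v$. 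The same two equalities also show that every neighbour of $v$ except $u$ has the constant valency $1/(td_v)$, a fact I will reuse below; moreover $u$ becomes isolated in $\G'$.

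Next I would apply Lemma \ref{conebi}, which tells us that $\G'$ is regular or biregular. If $\G'$ is regular then, since it contains the isolated vertex $u$, it must be the empty graph and $\G$ is a star. If $\G'$ is biregular then, as $u$ is isolated in $\G'$, one valency is $0$ and the other is some $k>0$; the valency-$0$ vertices form an independent set $I$, and the valency-$k$ vertices induce a $k$-regular graph $H$ (there are no edges between the two classes). In $\G$ the vertices of $I$ have valency $1$ and those of $H$ have valency $k+1$. The constant-valency fact from the first step now forces $|I|=1$, for a second vertex of $I$ would be a valency-$1$ neighbour of $v$ coexisting with the valency-$(k+1)$ neighbours supplied by the non-empty set $H$. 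Thus $\G'$ is the disjoint union of one isolated vertex and a $k$-regular graph $H$.

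It then remains to show that $H$ is a conference graph. Applying Theorem \ref{main}(ii) and (iii) to pairs of vertices of $H$, and using that the apex $v$ is a common neighbour of any two of them, shows that the numbers of common $H$-neighbours of adjacent (respectively non-adjacent) vertices are constants $\lambda_H$ (respectively $\mu_H$); hence $H$ is strongly regular with parameters $(m,k,\lambda_H,\mu_H)$, where $m=n-2$. I would then collect the equations of Theorem \ref{main} for $u$, for $v$, for the vertices of $H$, and for the two kinds of pairs inside $H$. These give $t=\frac{1}{(n-1)(k+1)}$, $\theta_1+\theta_2=2+\frac{1}{k+1}$, $\mu_H=\frac{k(k+1)}{n-1}$, and $\lambda_H=\mu_H-1$. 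Feeding $\lambda_H=\mu_H-1$ into the standard identity $k(k-\lambda_H-1)=(m-k-1)\mu_H$ yields $k^2=(m-1)\mu_H$; combining this with $\mu_H=\frac{k(k+1)}{n-1}$ and $m=n-2$ gives $m=2k+1$ and $\mu_H=k/2$. These are precisely the conference parameters, and $\mu_H>0$ makes $H$ connected, so $\G$ is the cone over an isolated vertex and a conference graph, as claimed.

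For the converse, the star is complete bipartite and has $\L$-eigenvalues $0,1,2$ by Proposition \ref{bipartite}. For the cone over the disjoint union of $H$ and an isolated vertex, with $H$ a conference graph on $n-2$ vertices, I would compute the spectrum directly rather than re-verifying all the equations of Theorem \ref{main}. The valency partition (apex, leaf, $V(H)$) is equitable, and its $3\times3$ quotient contributes the eigenvalue $0$ together with one copy each of $\theta_1$ and $\theta_2$. Every remaining eigenvector is supported on $V(H)$ and sums to zero there; for such an $f$ with $A_Hf=\rho f$, where $A_H$ is the adjacency matrix of $H$, one checks that $\L f=\left(1-\frac{\rho}{k+1}\right)f$, so the two conference eigenvalues $\rho=\frac12(-1\pm\sqrt{n-2})$ lift to the $\L$-eigenvalues $\frac{n\mp\sqrt{n-2}}{n-1}$, each with the conference multiplicity $\frac{n-3}{2}$. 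Adding the single copy of each coming from the quotient gives total multiplicity $\frac{n-1}{2}$, as stated. The part I expect to be genuinely delicate is the biregular analysis: establishing that there is exactly one isolated vertex, and then combining the strongly-regular identity with the cone equations to force $m=2k+1$. The remaining steps, including the multiplicity bookkeeping in the converse, are routine.
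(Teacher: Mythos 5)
Your forward direction follows the paper's skeleton --- pendant vertex $u$, apex $v$ adjacent to all other vertices (you rederive this from Theorem \ref{main}(iii) rather than quoting Corollary \ref{diameter}, which is fine), constant valency for the remaining vertices, and an induced strongly regular graph with $\lambda_H=\mu_H-1$. Your converse, on the other hand, is genuinely different from the paper's and more informative: the paper only re-verifies the combinatorial conditions of Theorem \ref{main} and asserts the multiplicities, whereas your lifting of the conference eigenvalues (note that $\L f=(1-\frac{\rho}{k+1})f$ holds literally, since $D^{1/2}f$ is a scalar multiple of $f$ on its support) together with the equitable quotient actually proves the multiplicity claim $\frac{n-1}{2}$; I checked the quotient's trace and determinant and the two non-trivial quotient eigenvalues are indeed $\frac{n\pm\sqrt{n-2}}{n-1}$.

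There is, however, a gap in the biregular case. Your route to $m=2k+1$ goes through the identity $k(k-\lambda_H-1)=(m-k-1)\mu_H$ combined with $\mu_H=\frac{k(k+1)}{n-1}$, and both presuppose that $H$ contains a non-adjacent pair of vertices (under the paper's definition, ``strongly regular'' itself requires $0<k<m-1$). Nothing you have established by that point excludes $H$ being complete, i.e.\ $\G$ being $K_{n-1}$ with a pendant vertex attached: the constant-valency fact, Lemma \ref{conebi}, and $|I|=1$ are all consistent with that configuration. If $H$ were complete, $\mu_H$ would not exist, the identity would read $0=0$, and your derivation of $m=2k+1$ would collapse; so the proof as written does not rule this graph out. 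The repair is cheap and is exactly what the paper does: use the valency equations of Theorem \ref{main}(i) for $u$ and for $w\in H$ --- equations you say you ``collect'' but never actually use. Eliminating $(\theta_1-1)(\theta_2-1)$ from $\frac1{n-1}=t-(\theta_1-1)(\theta_2-1)$ and $\frac1{n-1}+\frac{k}{k+1}=t(k+1)^2-(\theta_1-1)(\theta_2-1)(k+1)$, and substituting $t=\frac1{(n-1)(k+1)}$, gives $\frac{k}{k+1}=\frac{2k}{n-1}$, hence $k+1=\frac{n-1}{2}$ and $m=2k+1$ unconditionally; in particular $k<m-1$, so $H$ really is strongly regular, and then $\mu_H=\frac{k(k+1)}{n-1}=\frac k2$ and $\lambda_H=\frac k2-1$ finish the conference parameters with no appeal to the counting identity at all. (Alternatively, if $H$ were complete, your adjacent-pair equation with $\lambda_H=k-1$ forces $n-1=k+1$, contradicting $n=k+3$.)
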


\begin{proof} {Suppose that $\Gamma$ has three $\L$-eigenvalues, and $n$
vertices. Let $u$ be a vertex of valency $d_u=1$, and let $v$ be
its neighbor. Because the diameter of $\Gamma$ is two, it follows
that every other vertex is adjacent to $v$. So $\Gamma$ is a cone,
say over $\Gamma'$. If $\Gamma'$ is regular, then $\Gamma$ is a star graph
by Proposition \ref{cone}. So let's assume that $\Gamma'$ is not
regular, and hence is not empty.

Using that $\frac1{n-1}=\hat{\mu}_{uw}=td_w$ for all $w \neq u,v$,
we obtain that $d_w=\frac1{t(n-1)}=:d$ is the same for all $w \neq
u,v$. By combining
$\frac1{n-1}=\hat{d}_u=t-(\theta_1-1)(\theta_2-1)$ and
$\frac1{n-1}+\frac{d-1}d=\hat{d}_w=td^2-(\theta_1-1)(\theta_2-1)d$,
we find that $d=(n-1)/2$.

It is straightforward now to show that the induced graph on the
vertices except $u$ and $v$ is strongly regular with parameters
$(n-2,d-1,\lambda,\mu)$, where
$\frac1{n-1}+\frac{\lambda}d=td^2+2-\theta_1-\theta_2$ (the
normalized number of common neighbors of two adjacent vertices
$w,w'\neq u,v$), and $\frac1{n-1}+\frac{\mu}d=td^2=\frac12$ (the
normalized number of common neighbors of two non-adjacent vertices
$w,w'\neq u,v$). Using the above and the equation
$0=\hat{\lambda}_{uv}=t(n-1)+2-\theta_1-\theta_2$, this implies
that $\lambda=(d-3)/2$ and $\mu=(d-1)/2$.
Thus, we have found that $\Gamma'$ is the disjoint union of an
isolated vertex and a conference graph.

On the other hand, the star graph is complete bipartite, so has
three $\L$-eigenvalues. Also the cone over the disjoint union of an
isolated vertex and a conference graph has three $\L$-eigenvalues; the
non-trivial ones being $\frac{n \pm \sqrt{n-2}}{n-1}$ (this
follows from the above equations and Theorem \ref{main}), each with multiplicity $\frac{n-1}2$.
 }\end{proof}
We thus have examples where the multiplicities of the
non-trivial $\mathcal{L}$-eigenvalues are the same. We finish
this paper at the other extreme, by identifying the graphs
where one non-trivial $\mathcal{L}$-eigenvalue is simple.

\begin{prop}\label{2simple}
Let $\G$ be a graph with three $\mathcal{L}$-eigenvalues, of which two
are simple. Then $\G$ is either complete bipartite or
a cone over the disjoint union of two cliques of the same size.
\end{prop}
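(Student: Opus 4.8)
Since $\G$ is connected, $0$ is a simple $\mathcal{L}$-eigenvalue, so the hypothesis means that the three eigenvalues are $0$, a simple eigenvalue $\theta_1$, and an eigenvalue $\theta_2$ of multiplicity $n-2$ (the case in which both nontrivial eigenvalues are simple only occurs for $n=3$, where $\G=K_{1,2}$). The plan is to start from the spectral decomposition
$$\mathcal{L}=\theta_2 I-\frac{\theta_2}{2e}(D^{\frac12}{\bf j})(D^{\frac12}{\bf j})^{\top}+(\theta_1-\theta_2)\mathbf{x}\mathbf{x}^{\top},$$
where $\mathbf{x}$ is a unit $\theta_1$-eigenvector, and to read it off entrywise. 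Writing $y_u=x_u/\sqrt{d_u}$ and putting $c=\frac{\theta_2}{2e(\theta_1-\theta_2)}$, $c'=\frac{1}{\theta_1-\theta_2}$ and $s=\frac{1-\theta_2}{\theta_1-\theta_2}$ (so $c,c'\neq0$), the diagonal, the non-edge and the edge entries give
$$y_u^2=c+\frac{s}{d_u},\qquad y_uy_v=c\ \ (u\not\sim v),\qquad y_uy_v=c-\frac{c'}{d_ud_v}\ \ (u\sim v).$$
These three identities are equivalent to $\G$ having exactly this spectrum, and they drive everything.

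The argument then splits according to whether $s=0$. If $\G$ has an independent set of three vertices, the middle identity gives $y_uy_v=c$ for each pair, whence $y^2=c$ and $s=0$ at each of them; the same conclusion follows from any odd cycle of $\overline{\G}$, since alternating $y_uy_v=c$ around it again forces $y^2=c$. Hence, if $s\neq0$ then $\overline{\G}$ is bipartite. In the complementary case $s=0$ we have $\theta_2=1$, and the trace identity $\theta_1+(n-2)\theta_2=n$ forces $\theta_1=2$; thus $\G$ is $\mathcal{L}$-integral and Proposition~\ref{bipartite} identifies it as complete bipartite, which is the first alternative of the statement.

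There remains the case $s\neq0$, where $\overline{\G}$ is bipartite with colour classes $X,Y$; these are cliques of $\G$, and every non-edge runs between them. Here I would feed the eigenvector equation $\sum_{v\sim u}y_v=(1-\theta_1)d_uy_u$ together with the non-edge identity and the clique structure into one another; the sums telescope into
$$\Big(\textstyle\sum_w y_w\Big)y_u=-c\,\theta_1 d_u+\frac{s}{d_u}+\big((1-\theta_1)s+nc\big),$$
valid for every $u$. With $T=\sum_w y_w$, this shows that if $T\neq0$ then $d_u$ is determined by $y_u$; substituting back into the edge identity then forces every degree class to be a clique of a single fixed valency, a rigid blow-up that I expect to be incompatible with connectivity and three eigenvalues. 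If instead $T=0$ the identity is a quadratic in $d_u$, so $\G$ has at most two valencies. Since a connected regular graph with three $\mathcal{L}$-eigenvalues and a simple one is $K_{m,m}$ (for which $\theta_2=1$), the regular possibility is excluded here, and $\G$ is biregular.

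To finish I would show that a biregular graph surviving all the identities must have a vertex of valency $n-1$, so that $\G$ is a cone. By Lemma~\ref{conebi} the deleted graph $\G'$ is regular or biregular; a cone over a biregular graph of this shape produces two nontrivial eigenvalues of equal multiplicity (compare Proposition~\ref{valency1}), so simplicity forces $\G'$ to be regular. Proposition~\ref{cone} then makes $\G'$ a disjoint union of $m\geq2$ equal cliques, and the multiplicity $m-1$ of the eigenvalue $\tfrac1d$ equals $1$ precisely when $m=2$, giving the cone over two equal cliques. The main obstacle is exactly this $s\neq0$ branch: converting the ``two cliques joined by a bipartite set of non-edges'' picture into the single rigid configuration, i.e.\ excluding the $T\neq0$ blow-ups, ruling out biregular graphs with no universal vertex (the triangular prism already shows that two-clique graphs need not have three eigenvalues), and pinning down the equal clique sizes. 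The engine for all of this is the discrepancy $c'/(d_ud_v)\neq0$ between the edge and non-edge identities, which forbids $\mathbf{x}$ from being constant across $X$ and $Y$.
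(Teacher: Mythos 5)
Your spectral-decomposition setup is correct and is a genuinely different engine from the paper's: the identities $y_u^2=c+s/d_u$ (diagonal), $y_uy_v=c$ (non-edges), and $y_uy_v=c-c'/(d_ud_v)$ (edges) are indeed equivalent to the hypothesis, and your $s=0$ branch is a complete proof of the complete-bipartite alternative ($\theta_2=1$, the trace forces $\theta_1=2$, and Proposition \ref{bipartite} finishes); this matches the paper's $\theta=1$ case. The problem is that the entire second alternative --- the cone over two equal cliques --- is never actually proved. Three steps in your $s\neq0$ branch are announcements rather than arguments. First, the exclusion of $T\neq0$ rests on ``a rigid blow-up that I expect to be incompatible with connectivity and three eigenvalues''; nothing is proved there, and what your substitution actually yields is weaker than you state (an edge inside a valency class forces that class's valency to equal $-c'/s$, so at most one class can have internal edges; it does not make every class a clique). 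Second, the claim that a biregular graph satisfying the identities must contain a vertex of valency $n-1$ --- the very statement that $\G$ is a cone --- is introduced with ``I would show'' and never argued; this is the heart of the matter. Third, the deduction that simplicity forces $\G'$ to be regular leans on Proposition \ref{valency1}, but that proposition only concerns graphs with minimum valency one (star graphs and cones over an isolated vertex plus a conference graph); it says nothing about the eigenvalue multiplicities of a general cone over a biregular graph, so this step is unsupported. (Your side claim that a connected regular graph with three $\L$-eigenvalues, one of them nontrivial and simple, must be $K_{m,m}$ is also used without proof, though it is true.)

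The paper's proof shows what is needed and gets it from a sharper use of the multiplicity hypothesis: if $\theta\neq 1$ is the eigenvalue of multiplicity $n-2$, then $\L-\theta I$ has rank two, so every $3\times3$ principal submatrix is singular, whence $\G$ has no coclique of size three (your $s\neq0$ argument recovers exactly this fact). Then, for a non-adjacent pair $u,w$, the rows $R_u,R_w$ span the row space of $\L-\theta I$, and writing the row of a common neighbor $v$ as $(1-\theta)R_v=-\frac1{\sqrt{d_ud_v}}R_u-\frac1{\sqrt{d_wd_v}}R_w$ forces $v$ to be adjacent to every other vertex; comparing entries of this relation shows the universal vertex $v$ is the unique common neighbor of $u$ and $w$; and the analogous relation $(1-\theta)R_z=-\frac1{\sqrt{d_ud_z}}R_u$ for $z\sim u$, $z\not\sim w$ shows the remaining vertices split into two cliques of one common valency. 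If you want to salvage your approach, this row-dependence argument is the missing ingredient: your entrywise identities, as deployed, never produce the universal vertex, its uniqueness, or the equality of the two clique sizes.
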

\begin{proof} {Let $\theta$
be the $\L$-eigenvalue with multiplicity $n-2$. So the
rank of $\mathcal{L}-\theta I$ is two. First, assume that $\theta=1$.
Using Lemma \ref{basic}, it follows that the $\mathcal{L}$-spectrum
of $\G$ is $\{[0]^1, [1]^{n-2}, [2]^{1}\}$, and hence by Proposition
\ref{bipartite}, $\G$ is complete bipartite.

Next, assume that $\theta \neq 1$. By considering principal submatrices
of $\mathcal{L}-\theta I$ of size three, it follows that $\G$ has no cocliques of size three.
For a vertex $u$, let $R_u$ be
the corresponding row in $\L-\theta I$. Consider now two vertices $u$ and $w$ that
are not adjacent. Then $R_u$ and $R_w$ span the row space of $\L-\theta I$.
Let $v$ be a common neighbor of $u$ and $w$. Then
$$(1-\theta)R_v=-\frac1{\sqrt{d_ud_v}}R_u-\frac1{\sqrt{d_wd_v}}R_w.$$
This implies that if $z$ is any fourth vertex --- which is
adjacent to at least one of $u$ and $w$ --- is adjacent to $v$.
So $v$ is adjacent to all other vertices; $d_v=n-1$.

We claim now that $v$ is the only common neighbor of $u$ and
$w$. To show this claim, suppose that $v'$ is another common
neighbor; hence also $d_{v'}=n-1$. Then applying the above
equation to entries corresponding to $v$ and $v'$ shows that
$(1-\theta)^2 =
-(1-\theta)\frac1{n-1}=\frac1{d_u(n-1)}+\frac1{d_w(n-1)}$. This
implies that $\theta=\frac n{n-1}$, which implies that $\G$ is
a complete graph by (ii) and (iii) of Lemma \ref{basic}; a
contradiction.

Because of Proposition \ref{valency1}, both $u$ and $w$ are not vertices with valency one.
Therefore there are vertices that are adjacent to one, but not the other.
If $z$ is a vertex that is adjacent to $u$, but not to $w$, then
$$(1-\theta)R_z=-\frac1{\sqrt{d_ud_z}}R_u,$$ which implies that any vertex different from $u$ and $z$
is adjacent to $u$ if and only if it is adjacent to $z$, and so $d_u=d_z$. Moreover, it tells us
that $d_u=\frac1{\theta-1}$ (consider the entry corresponding to $v$ in the above equation). Of course,
the situation where $u$ and $w$ are interchanged is completely the same. It thus follows that
$\G$ is a cone over the disjoint union of two cliques of the same size.
}\end{proof}

\section{Concluding remarks}
By computer, we checked all connected graphs with at most 10
vertices, and millions of graphs with 11 or 12 vertices and
diameter two for having three normalized Laplacian eigenvalues.
In this way, we obtained only a few nonregular nonbipartite
examples; all of these can be constructed by the methods in
this paper.

However, a classification of all graphs with three normalized
Laplacian eigenvalues still seems out of reach. In this paper,
we gave a combinatorial characterization that turned out to be
useful in such a classification within some very special
classes of graphs. In future work, it seems interesting also to
consider graphs with more distinct valencies, or to find an
upper bound on the number of distinct valencies in graphs with
three normalized Laplacian eigenvalues.

Finally, we mention that after submitting this paper, we were
informed that some of our results were obtained also by Cavers
\cite{Cth}.

\noindent {\bf Acknowledgements.} We thank a referee for some
very useful suggestions, and Kris Coolsaet for writing a
version of nauty's graph generator geng that filters graphs
with diameter two.

{\small

\bibliographystyle{siam}
}
\end{document}